\DeclareMathOperator{\harmean}{H}
\DeclareMathOperator{\isoconst}{\mathcal{I}}
\renewcommand{\genus}{\operatorname{\lambda}}
\address{Department of Mathematics, University of California San Diego}
\email{pbryan@ucsd.edu}
\keywords{Ricci flow, Isoperimetric profile, curvature}
\subjclass[2010]{53C44, 35K55, 58J35}
\date{}
\title{Curvature bounds via an isoperimetric comparison for Ricci flow on surfaces}
\author{Paul Bryan}
\begin{document}

\maketitle

\begin{abstract}
A comparison theorem for the isoperimetric profile on the universal cover of surfaces evolving by normalised Ricci flow is proven. For any initial metric, a model comparison is constructed that initially lies below the profile of the initial metric and which converges to the profile of the constant curvature metric. The comparison theorem implies that the evolving metric is bounded below by the model comparison for all time and hence converges to the constant curvature profile. This yields a curvature bound and a bound on the isoperimetric constant, leading to a direct proof that the metric converges to the constant curvature metric.
\end{abstract}

\section{Introduction}
\label{sec-1}

The Ricci flow is the nonlinear geometric parabolic evolution equation
\begin{equation}
\label{eq:RF}
\begin{cases}
\pd{t} \metric &= -2\ricci(t) \\
\metric(0) &= \metric_0
\end{cases}
\end{equation}
for a smooth family of Riemannian metrics $\metric(t)$ on a smooth manifold $M$ with Ricci curvature $\ricci(t)$ and an arbitrary smooth initial metric $\metric_0$. Here we are interested in the case of closed surfaces, that is, $2$ dimensional, compact manifolds $M$ without boundary. The results here pertain to the \emph{normalised} flow, preserving the $2$-dimensional volume of $M$. After rescaling the initial metric to have volume $4\pi$ and applying the Gauss-Bonnet formula, the normalised flow on surfaces takes the form
\begin{equation}
\label{eq:NRF}
\pd{t}\metric = -2(\gausscurv - \avg{\gausscurv})\metric
\end{equation}
where $\gausscurv$ is the Gaussian curvature and $\avg{\gausscurv} = \frac{1}{4\pi} \int_M \gausscurv \metricmeasure{\metric}$ is the average Gauss curvature on $M$ \cite{MR2729306}. An important consequence of writing the equation in this way is that it may be lifted to the universal cover $\unicover{M} \to M$. That is, the pullback metric $\unicover{\metric}(t) = \pullback{\uniproj} \metric (t)$ also evolves according to equation \eqref{eq:NRF}.

The main theorem of this paper is a comparison theorem for the isoperimetric profile of a surface with metric evolving by the normalised Ricci flow, generalising the comparison theory in \cite{MR2729306} for $M=\sphere^2$ to arbitrary closed surfaces. Recall, the isoperimetric profile is the least boundary area enclosing a given volume (see section \ref{sec:isoprofile} for a precise definition). 
\begin{theorem}
[Main Theorem \ref{thm:comparison}]
Let $(M, \metric(t))$ be a Ricci flow of a closed surface and $(\unicover{M}, \unicover{\metric}(t))$ be the lift to the universal cover.  Let $\phi: (0, \abs{\unicover{M}}) \times [0, T) \to \RR$ be a smooth, strictly positive, strictly concave function satisfying
\[
\pd[\phi]{t} \leq \phi'' \phi^2 - (\phi')^2 \phi + \phi'\left(4\pi - 2(1-\genus)a_0\right) + (1-\genus)\phi.
\]
along with the asymptotic behaviour
\[
\limsup_{a\to 0} \frac{\phi(a, t)}{\sqrt{4\pi a}} \leq  1
\]
and
\[
\limsup_{a\to \infty} \left(\isoprofile(a, t) - \phi(a,t)\right) \geq 0.
\]

If the initial inequality, $\phi(a, 0) < \isoprofile_{\unicover{\metric}(0)} (a)$ for all $a \in (0, \abs{\unicover{M}})$ holds, then $\phi(a, t) \leq \isoprofile_{\unicover{\metric}(t)} (a)$ for all $a, t$ with strict inequality if the inequality in \eqref{eq:ricci_iso_diff_inequal} is strict.
\end{theorem}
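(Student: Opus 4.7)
The natural approach is a parabolic maximum principle argument via the first time of contact. I would argue by contradiction: suppose $\phi(a, t) > \isoprofile_{\unicover{\metric}(t)}(a)$ somewhere, and let $t_0$ be the infimum of such times. The small-area asymptotic $\limsup_{a\to 0}\phi/\sqrt{4\pi a} \leq 1$ matches the universal asymptotic of $\isoprofile$ coming from small geodesic balls, preventing contact at $a \to 0$; the condition $\limsup_{a\to\infty}(\isoprofile - \phi) \geq 0$ prevents contact in the limit $a \to \infty$. A first contact point $(a_0, t_0)$ therefore lies in the interior of $(0, \abs{\unicover{M}}) \times (0, T)$, and at such a point the usual touching relations are $\phi = \isoprofile$, $\phi' = \isoprofile'$, $\phi'' \leq \isoprofile''$, and $\pd[\phi]{t} \geq \pd[\isoprofile]{t}$.

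The central geometric ingredient I would establish is a matching differential inequality for $\isoprofile$ itself, namely
\[
\pd[\isoprofile]{t} \geq \isoprofile'' \isoprofile^2 - (\isoprofile')^2 \isoprofile + \isoprofile'\bigl(4\pi - 2(1-\genus)a\bigr) + (1-\genus)\isoprofile,
\]
interpreted in the viscosity sense at contact points. To derive this I would fix an isoperimetric region $\Omega \subset \unicover{M}$ at time $t_0$ enclosing area $a_0$. Two-dimensional regularity theory yields that $\partial\Omega$ is a smooth curve of constant geodesic curvature $\kappa$; the first variation identifies $\isoprofile' = \kappa$, and the Jacobi-type second variation produces the upper bound on $\isoprofile''$ that is responsible for the $-(\isoprofile')^2\isoprofile$ term after averaging. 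The $(1-\genus)$ factors enter through Gauss--Bonnet on $\Omega$ combined with $\avg{\gausscurv} = 1-\genus$ on $M$ (a consequence of normalising the volume to $4\pi$). Finally, the evolution $\pd{t}\unicover{\metric} = -2(\gausscurv - \avg{\gausscurv})\unicover{\metric}$ from \eqref{eq:NRF} controls the time derivatives of both length and enclosed area along the variation, supplying the $4\pi - 2(1-\genus)a_0$ coefficient in front of $\isoprofile'$ and closing the formula for $\pd[\isoprofile]{t}$.

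Combining the hypothesised inequality for $\phi$ with the derived inequality for $\isoprofile$ at $(a_0, t_0)$ yields the contradiction: since $\phi = \isoprofile > 0$ and $\phi'' \leq \isoprofile''$, we have $\phi''\phi^2 \leq \isoprofile''\isoprofile^2$, while the remaining terms agree exactly, so $\pd[\phi]{t} \geq \pd[\isoprofile]{t} \geq$ RHS$(\isoprofile) \geq$ RHS$(\phi) \geq \pd[\phi]{t}$ forces equality throughout. Since the initial gap $\phi(\cdot, 0) < \isoprofile(\cdot, 0)$ is strict, I would promote this to a genuine contradiction by passing to the perturbation $\phi_\eta := \phi - \eta e^{Ct}$ for a suitable constant $C$ depending on bounds on $\phi$ and its derivatives on $[0, t_0]$; this satisfies a strict version of the hypothesised inequality, the argument above then excludes any first contact for $\phi_\eta$, and sending $\eta \to 0$ recovers the weak inequality. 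The strict conclusion in the case of strict hypothesis requires no perturbation.

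The main obstacle I anticipate is the existence and regularity of an isoperimetric minimiser on the universal cover in the non-compact case $\genus \geq 1$. A minimising sequence at a given enclosed area may lose mass to infinity, so I would either exploit the $\pi_1(M)$-action by deck transformations to translate escaping pieces back into a fundamental domain and invoke concentration-compactness, or work with a sequence of regions whose variational identities survive passage to the limit. Casting the differential inequality for $\isoprofile$ purely in viscosity form removes the need for a literal minimiser at $(a_0, t_0)$: the comparison function $\phi$ plays the role of the smooth barrier, and only the variational identities $\isoprofile' = \kappa$ and the second-variation upper bound need to be stable under approximation. Making this stability precise is where the real analytic work will lie.
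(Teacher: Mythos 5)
Your overall architecture (first-contact point, viscosity inequality for $\isoprofile$ derived from the first and second variations plus Gauss--Bonnet, comparison of the two inequalities at the touching point) is the paper's architecture. But there is a genuine gap at the heart of your argument: the coefficient of $\isoprofile'$ in the differential inequality for the profile. When you run Gauss--Bonnet on an isoperimetric region $\Omega_0\subset\unicover{M}$ you get $2\pi\chi(\Omega_0)$, not $2\pi$, so what the variational computation actually yields (Theorem \ref{thm:ricci_iso_viscosity}) is
\[
\pd{t}\isoprofile \geq \isoprofile''\isoprofile^2 - (\isoprofile')^2\isoprofile + \bigl(4\pi\chi_0 - 2(1-\genus)a\bigr)\isoprofile' + (1-\genus)\isoprofile ,
\]
with $\chi_0=\chi(\Omega_0)$ a priori unknown; isoperimetric regions in $\unicover{M}$ could be disconnected or have holes, in which case $\chi_0\leq 0$ and, since $\phi'=\isoprofile'$ at the contact point, the $4\pi\chi_0\isoprofile'$ term need not dominate the $4\pi\phi'$ term in your hypothesis on $\phi$. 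Your proposal silently writes the profile inequality with $\chi_0=1$, which is exactly the step that needs proof. The paper closes this gap with Lemma \ref{thm:curvetopology}: at a \emph{first} contact time the comparison $\phi(\cdot,t_0)$ is a globally defined, strictly positive, strictly concave lower support for $\isoprofile_{t_0}$, and strict subadditivity then forces $\Omega_0$ to be connected with connected complement, hence simply connected on $\sphere^2$ or $\RR^2$ by the Jordan curve theorem, so $\chi_0=1$. This is the reason strict concavity of $\phi$ is a hypothesis of the theorem; your write-up never uses it.

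A secondary problem is your perturbation $\phi_\eta=\phi-\eta e^{Ct}$ for the non-strict case. This destroys positivity of the comparison near $a=0$ (so the connectivity lemma above no longer applies), and the strictness computation fails there anyway: the perturbation contributes a term $\eta e^{Ct}\phi''$ to the right-hand side, and $\phi''\sim -c\,a^{-3/2}$ near $a=0$ under the required asymptotics $\phi\sim\sqrt{4\pi a}$, so no fixed constant $C$ dominates it. The paper instead uses the multiplicative perturbation $\phi_\epsilon=(1-\epsilon)\phi$, which preserves positivity, strict concavity and the asymptotics, and produces a strict differential inequality with error term $\epsilon(1-\epsilon)(2-\epsilon)(\phi^2\phi''-(\phi')^2\phi)<0$ precisely because $\phi$ is strictly concave. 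Finally, your concern about existence of minimisers on the non-compact cover is resolved by standard geometric measure theory for co-compact manifolds (the deck group acts cocompactly by isometries), and the viscosity formulation does not remove the need for a minimiser at $(a_0,t_0)$ --- the derivation of the viscosity inequality varies an actual isoperimetric region --- so concentration-compactness machinery is neither needed nor the right emphasis.
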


As an application, by a standard bootstrapping argument, a proof of the Hamilton-Chow theorem (Theorem \ref{thm:convergence}) is obtained directly as a corollary of Theorem \ref{thm:comparison}. This achieved by suitable choices of comparison functions, leading to explicit curvature bounds and bounds on the isoperimetric constant as described in sections \ref{sec:models} and \ref{sec:convergence}.

\begin{theorem}
[\cite{MR954419, MR1094458}]
\label{thm:convergence}
Given any initial metric $\metric_0$, there exists a unique solution to the normalised Ricci flow existing for all time $t \in [0,\infty)$ and such that $\metric(t) \to_{C^{\infty}} \metric_{\genus}$, the metric of constant curvature $\gausscurv = 1-\genus$ as $t\to\infty$.
\end{theorem}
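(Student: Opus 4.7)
The plan is to derive Theorem \ref{thm:convergence} as a corollary of Theorem \ref{thm:comparison} by selecting, for each topological type of $M$, an explicit comparison profile $\phi(a, t)$ whose long-time limit is the isoperimetric profile of the constant curvature metric $\metric_\genus$ on $\unicover{M}$, and then using the resulting uniform isoperimetric lower bound to extract a curvature bound that drives the standard parabolic bootstrap.

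Short-time existence and uniqueness for \eqref{eq:NRF} are classical, so let $T^\ast \in (0, \infty]$ denote the maximal existence time. For the comparison function I would look to isoperimetric profiles of rotationally symmetric model surfaces approaching constant curvature $1 - \genus$: round spheres when $\genus = 0$, expanding flat discs or the Euclidean plane when $\genus = 1$, and hyperbolic planes of curvature $1 - \genus$ when $\genus \geq 2$. These are natural candidates because their profiles essentially solve the PDE for the corresponding model flow, and they can be perturbed or time-shifted so that the resulting $\phi$ satisfies the differential inequality and the two asymptotic conditions of Theorem \ref{thm:comparison}. The strict initial ordering $\phi(\cdot, 0) < \isoprofile_{\unicover{\metric}(0)}$ is automatic for a sufficiently small or shifted model, since every smooth surface profile obeys $\isoprofile(a) \sim \sqrt{4\pi a}$ as $a \to 0$. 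Applying Theorem \ref{thm:comparison} then yields $\isoprofile_{\unicover{\metric}(t)}(a) \geq \phi(a, t)$ for all $a \in (0, \abs{\unicover{M}})$ and all $t \in [0, T^\ast)$, and by design $\phi(\cdot, t) \to \isoprofile_{\metric_\genus}$ as $t \to \infty$.

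Next I would bootstrap this lower bound into two-sided curvature control. A lower bound $\gausscurv \geq -C$ follows from the maximum principle applied to the evolution $\pd{t}\gausscurv = \Delta \gausscurv + 2\gausscurv(\gausscurv - \avg{\gausscurv})$. For the upper bound I would use the universal small-$a$ expansion $\isoprofile(a, t) = \sqrt{4\pi a}\bigl(1 - \frac{\gausscurv_{\max}(t)}{8\pi}\, a + o(a)\bigr)$, which converts a uniform lower bound on $\isoprofile$ by $\phi$ into a uniform upper bound on $\gausscurv_{\max}(t)$. With a uniform $C^0$ curvature bound in hand, Shi-type derivative estimates produce uniform $C^k$ bounds on $\metric(t)$ for every $k$, so $T^\ast = \infty$. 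The resulting $C^\infty$ precompactness then guarantees smooth subsequential limits, and since $\isoprofile_{\unicover{\metric}(t)} \to \isoprofile_{\metric_\genus}$ by the squeeze, rigidity of isoperimetric profiles in the three model geometries identifies any such limit with $\metric_\genus$ up to isometry, upgrading to $C^\infty$ convergence $\metric(t) \to \metric_\genus$.

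The main obstacle I anticipate is constructing the comparison profiles $\phi$ themselves: one must simultaneously satisfy the differential inequality, the two asymptotic conditions, the strict initial ordering, and the correct long-time limit, which requires separate case analysis in $\genus$ and is presumably where most of the work in section \ref{sec:models} lies, with the hyperbolic case $\genus \geq 2$ requiring $\phi$ to track the exponential growth of the target profile. A secondary technical point is ensuring that the $o(a)$ remainder in the small-$a$ expansion of $\isoprofile$ is controlled uniformly in $t$, so that the isoperimetric lower bound actually caps a pointwise curvature rather than only an averaged quantity.
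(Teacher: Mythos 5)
Your overall strategy --- explicit model comparisons for each genus, the comparison theorem to get $\isoprofile_{\unicover{\metric}(t)} \geq \phi(\cdot,t)$, the small-$a$ expansion of the profile to convert that lower bound into an upper curvature bound, the maximum principle for the lower curvature bound, and bootstrapping to all-time existence with higher derivative estimates --- is exactly the paper's. The construction of the $\phi$'s, which you defer, is indeed where most of the work lies (in particular the genus $>1$ case with $\sup\gausscurv>0$ requires a two-parameter family $v_{C(t)}+b(t)a^2$ with carefully coupled ODEs for $C$ and $b$), but you correctly identify it as the main task rather than assuming it away.

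The genuine gap is in your final step. First, the comparison theorem gives only the one-sided bound $\isoprofile_{\unicover{\metric}(t)} \geq \phi(\cdot,t)$; no matching upper bound is ever established, so there is no ``squeeze'' and you cannot conclude $\isoprofile_{\unicover{\metric}(t)} \to \isoprofile_{\metric_{\genus}}$ from the comparison alone. What you can extract is $\sup_M \gausscurv(t) \to 1-\genus$, which together with Gauss--Bonnet on the compact quotient forces any smooth limit metric to have constant curvature $1-\genus$; that, not profile rigidity, is the correct identification mechanism. Second, and more seriously, ``$C^\infty$ precompactness plus rigidity identifies any subsequential limit with $\metric_{\genus}$ up to isometry'' yields only subsequential convergence modulo isometry. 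It does not give convergence of the tensors $\metric(t)$ themselves: a priori the flow could drift indefinitely through the isometry orbit or oscillate between distinct limits. The paper closes this with a quantitative rate: for $\genus \neq 1$ the curvature deviation decays like $e^{-at}$, so $\abs{\pd{t}\ln\metric(t)(v,v)} = 2\abs{\gausscurv-(1-\genus)}$ is integrable in $t$ and the metrics converge by Hamilton's argument; for the $1/t$-decay case, which is not integrable, one must instead use the integrable gradient bound $\abs{\nabla\gausscurv}\leq Ct^{-3/2}$. Your proposal needs some such rate (or a monotonicity argument) to upgrade subsequential to genuine $C^\infty$ convergence; as written, the last sentence does not follow from what precedes it.
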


The use of the isoperimetric profile here is an extension from the 2-sphere to arbitrary surfaces of the results in \cite{MR2729306}, which in turn are based on the isoperimetric estimates in \cite{MR1369139}. We begin in section \ref{sec:isoprofile} with a treatment of the isoperimetric profile of surfaces, deriving a viscosity equation via variational techniques which forms the heart of the comparison theorem. The comparison theorem is proven in section \ref{sec:comparison} by coupling the time-variation of the isoperimetric profile under the normalised Ricci flow with the spatial viscosity equation, yielding the parabolic version of the viscosity equation. Section \ref{sec:models} is devoted to constructing suitable model comparisons. The construction on the $2$-sphere was given in \cite{MR2729306} and is briefly described. Curiously, the most difficult case to deal with is for surfaces of genus $\genus > 1$, which historically was perhaps the easiest case by applying the maximum principle and introducing a potential function \cite{MR954419}. For initially negatively curved surfaces however, the model given here is quite appealing. Finally in section \ref{sec:convergence}, the boostrapping convergence argument is briefly described.

\section*{Acknowledgements}
I would like to thank Professor Ben Andrews for his generous guidance and support supervising my Ph.D. thesis on which this paper is based. I would also like to thank Professor Gang Tian and BICMR for sponsoring a very enjoyable stay in Beijing where my thoughts on variational techniques were greatly clarified on the banks of WeiMing Lake. Finally, this paper was written during my time as SEW Assistant Professor at UCSD.
\section{The Isoperimetric Profile}
\label{sec-2}
\label{0b158ef8-2a10-4c0e-9d45-0de44bbddf06}

\label{sec:isoprofile}
\subsection{Definition and Basic Properties}
\label{sec-2-1}

\begin{defn}
The \emph{isoperimetric profile}, $\isoprofile_M : (0, \abs{M}) \to \RR_+$ of $M$ is defined by
\[
\isoprofile_M(a) = \inf \left\{\abs{\bdry{\Omega}} : \abs{\Omega} = a \right\}
\]
where the infimum is taken over all relatively compact open sets $\Omega$ with smooth boundary.  Such $\Omega$ are said to be \emph{admissible regions}.  If $\Omega$ is an admissible region such that $\isoprofile_M(\abs{\Omega}) = \abs{\bdry{\Omega}}$, we will call $\Omega$ an \emph{isoperimetric region}.
\end{defn}

A basic theorem we will assume here is that for every $a\in(0,\abs{M})$, there exists a corresponding isoperimetric region (with smooth boundary apart from a set of Hausdorff dimension at most $n-7$ on an $n$ dimensional manifold) provided $M$ is either compact or co-compact. In particular, smooth isoperimetric regions exist on a closed surface and its universal cover equipped with the pull-back metric. The proof of this fact is a standard result of geometric measure theory \cite[pp. 128-129]{MR2455580}. A simplified proof in the case of surfaces is given in \cite{MR1661278} using regularity techniques developed in \cite{MR1417428}.

It will be important for us to understand the behaviour of the isoperimetric profile near the end points $\{0, \abs{M}\}$.  In the situation where $M$ is compact, then the complement of an isoperimetric region is again an isoperimetric region, so the isoperimetric profile is symmetric about $\abs{M}/2$ and it suffices to consider only the behaviour near $0$.  In the non-compact case, the behaviour near $0$ is the same as for the compact case, so let us begin with the behaviour near $0$.

\begin{theorem}
\label{thm:bdrybehaviour}
Let $M$ be a smooth Riemannian surface without boundary and such that $\sup_M \gausscurv < \infty$.  Then the isoperimetric profile satisfies
\[
\isoprofile(a)= \sqrt{4\pi a} - \frac{\sup_M \gausscurv}{4\sqrt{\pi}} a^{3/2} + O(a^{5/2}) \quad \text{as} \quad a\to 0.
\]
\end{theorem}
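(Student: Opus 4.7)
The plan is to establish matching upper and lower bounds of the stated precision.

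For the upper bound, I would use geodesic disks as trial regions. In geodesic polar coordinates $(r,\theta)$ at a point $p$, the metric is $dr^2+f^2d\theta^2$ where the Jacobi field $f$ satisfies $f_{rr}=-\gausscurv f$ with $f(0,\theta)=0$, $f_r(0,\theta)=1$. A Taylor expansion, in which the $\theta$-average of the first directional derivative of $\gausscurv$ at $p$ vanishes, gives
\[
L(r)=2\pi r-\tfrac{\pi \gausscurv(p)}{3}r^3+O(r^5),\qquad A(r)=\pi r^2-\tfrac{\pi \gausscurv(p)}{12}r^4+O(r^6)
\]
for the length and area of the geodesic ball of radius $r$ about $p$. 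Inverting $A(r)=a$ as a power series in $\sqrt{a}$ and substituting produces $L(a)=\sqrt{4\pi a}-\tfrac{\gausscurv(p)}{4\sqrt{\pi}}a^{3/2}+O(a^{5/2})$, and taking a sequence $p_n$ with $\gausscurv(p_n)\to\sup_M \gausscurv$ yields the required one-sided bound on $\isoprofile$.

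For the lower bound I would argue by concentration. For $a$ small, any admissible $\Omega$ whose perimeter is within $o(a^{3/2})$ of $\isoprofile(a)$ must be contained in a small geodesic ball $B_{\delta(a)}(p_0)$ with $\delta(a)\to 0$, since any spread-out configuration already incurs too much perimeter via the Euclidean isoperimetric inequality applied locally in normal coordinates. In normal coordinates at $p_0$ the metric has the expansion $g_{ij}=\delta_{ij}-\tfrac{\gausscurv(p_0)}{3}(|x|^2\delta_{ij}-x_ix_j)+O(|x|^3)$, so carrying out a second-order comparison of Riemannian with Euclidean length and area on $\Omega$, and applying the Euclidean isoperimetric inequality, gives $|\partial\Omega|_g\ge\sqrt{4\pi a}-\tfrac{\gausscurv(p_0)}{4\sqrt{\pi}}a^{3/2}+O(a^{5/2})$. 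The upper bound then forces $\gausscurv(p_0)\to\sup_M \gausscurv$ as $a\to 0$, completing the argument.

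The main obstacle is the lower bound. A purely qualitative concentration argument only controls the correction term up to $o(a^{3/2})$; to extract the quantitative $O(a^{5/2})$ remainder one must combine a quantitative concentration statement for almost-minimizers with the second-order (Gauss-lemma) expansion of the metric. When $\sup_M \gausscurv$ is not attained (permitted here since $M$ may be non-compact), a further approximation step is needed, exploiting only the standing hypothesis $\sup_M \gausscurv<\infty$ to produce basepoints $p_n$ with almost-maximal curvature on which the expansion applies uniformly.
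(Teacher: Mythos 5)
Your upper bound is exactly the paper's: geodesic balls with the Bertrand--Puiseux/Gray expansions for $\abs{B_r(p)}$ and $\abs{\bdry{B_r(p)}}$, inverted in powers of $\sqrt{a}$. The lower bound, however, is where your argument has a genuine gap, and it sits not in the concentration step you flag but in the sentence ``carrying out a second-order comparison of Riemannian with Euclidean length and area on $\Omega$, and applying the Euclidean isoperimetric inequality, gives $\abs{\bdry{\Omega}}_g\ge\sqrt{4\pi a}-\tfrac{\gausscurv(p_0)}{4\sqrt{\pi}}a^{3/2}+O(a^{5/2})$.'' In normal coordinates the curvature corrections to length and area are
\[
\abs{\bdry{\Omega}}_g=\abs{\bdry{\Omega}}_{euc}-\frac{\gausscurv(p_0)}{6}\int_{\bdry{\Omega}}h(x)^2\,ds+\dots,\qquad
\abs{\Omega}_g=\abs{\Omega}_{euc}-\frac{\gausscurv(p_0)}{6}\int_{\Omega}\abs{x}^2\,dx+\dots,
\]
where $h(x)$ is the distance from the origin to the tangent line. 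For a region of area $a$ contained in a ball of radius $\sim\sqrt{a}$ both correction integrals are of size $a^{3/2}$ and $a^2$ respectively relative to nothing universal: they depend on the \emph{shape and placement} of $\Omega$, not just on $a$. The Euclidean isoperimetric inequality alone therefore cannot deliver the sharp coefficient $-\gausscurv(p_0)/(4\sqrt{\pi})$; for that you would need to know $\Omega$ is quantitatively close to a centred round disk (a stability argument) or perform a symmetrization. This is precisely the content of the Bol--Fiala inequality $L^2\ge 4\pi A-(\sup_M\gausscurv)\,A^2$ for simply connected domains under an upper curvature bound, which is what the paper invokes after observing that for small $a$ an isoperimetric region has perimeter-controlled diameter and hence lies in a geodesic ball. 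Expanding $\sqrt{4\pi a-(\sup_M\gausscurv)a^2}$ gives the lower bound with no remainder to control.

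A secondary problem with your route is uniformity of the remainders under the standing hypothesis $\sup_M\gausscurv<\infty$ only: the $O(\abs{x}^3)$ term in the normal-coordinate expansion (and the $O(r^4)$ terms in the geodesic-ball expansions at a moving basepoint $p_n$ with $\gausscurv(p_n)\to\sup_M\gausscurv$) involves derivatives of $\gausscurv$, which are not assumed bounded. The Bol--Fiala route sidesteps this entirely for the lower bound, since it is a global inequality requiring only $\gausscurv\le\sup_M\gausscurv$ with no error term. You correctly identify that the quantitative concentration of almost-minimizers and the non-attainment of $\sup_M\gausscurv$ need attention, but the decisive missing ingredient is the sharp second-order isoperimetric comparison itself, i.e.\ Bol--Fiala.
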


\begin{proof}
Small geodesic balls about any point $p$ are admissible regions.  The result of \cite[Theorem 3.1]{MR0339002} gives $\abs{B_r(p)} = \pi r^2 \left(1 - \frac{\gausscurv(p)}{12} r^2 + O(r^4)\right)$ and $\abs{\bdry{B_r(p)}} = 2\pi r \left(1 - \frac{\gausscurv(p)}{6} r^2 +O(r^4)\right)$.  The upper bound follows since $\abs{\bdry{B_r(p)}} \geq \isoprofile(\abs{B_r(p)})$.

To prove the lower bound, first choose $a_0$ sufficiently small to ensure that $\isoprofile(a_0)$ is much smaller than the injectivity radius of $M$.  Then an isoperimetric region $\Omega_0$ corresponding to $a_0$ lies inside a geodesic ball about some point $p$ (width is bounded above by perimeter for surfaces).  Since geodesic balls are simply connected and $\gausscurv \leq \gausscurv_0 = \sup_M\gausscurv$, the Bol-Fiala inequality (see \cite{MR0500557}) then gives
\begin{align*}
\isoprofile(a_0) \geq \sqrt{4\pi a_0 - \gausscurv_0 a_0^2} = \sqrt{4\pi a_0} - \frac{\gausscurv_0}{4\sqrt{\pi}} a_0^{3/2} + O(a_0^2).
\end{align*}
\end{proof}

Next, we have the asymptotics of the isoperimetric profile near $\infty$ for non-compact $\unicover{M}$.

\begin{theorem}
\label{thm:asymptotics_large}
Let $M$ be a closed, genus $\genus\geq 1$ surface with metric $\metric$, normalised to have $\abs{M} = 4\pi$ and let $\uniproj: \unicover{M} \to M$ be the universal cover of $M$ equipped with the pull-back metric $\unicover{\metric} = \uniproj^{\star}\metric$.  Then
\[
\isoprofile_{\unicover{\metric}}(a) \to C \sqrt{4\pi a - (1-\genus) a^2}
\]
as $a\to\infty$ for some $C>0$.
\end{theorem}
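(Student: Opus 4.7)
The plan is to apply Gauss-Bonnet to a large isoperimetric region of $\unicover{M}$ and exploit the $\Gamma$-periodic structure of $(\unicover{M},\unicover{\metric})$ (where $\Gamma$ is the deck transformation group, with $\unicover{M}/\Gamma = M$) to extract an asymptotic differential equation for $\isoprofile_{\unicover{\metric}}$. Comparability up to universal constants comes by bootstrapping that equation; the upgrade to a single limit constant $C$ comes from a quasi-subadditivity argument specific to the periodic cover.

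For fixed large $a$, take an isoperimetric region $\Omega \subset \unicover{M}$ of area $a$ with smooth boundary (existence cited earlier in the section). Since $\unicover{M}$ is simply connected and $\isoprofile_{\unicover{\metric}}$ is strictly increasing on $(0,\infty)$, $\Omega$ is forced to be a topological disc: filling a hole strictly decreases perimeter while strictly increasing area, contradicting monotonicity, while disconnected competitors are eliminated by the concavity/subadditivity of $\isoprofile$ on simply-connected surfaces. The first variation among volume-preserving perturbations gives that $\partial\Omega$ has constant geodesic curvature $\kappa = \isoprofile_{\unicover{\metric}}'(a)$ (the derivative interpreted in the viscosity sense where needed), so Gauss-Bonnet on the disc $\Omega$ reads
\[
\isoprofile_{\unicover{\metric}}'(a)\,\isoprofile_{\unicover{\metric}}(a) + \int_\Omega K\,dA = 2\pi.
\]
To estimate $\int_\Omega K\,dA$, fix a compact fundamental domain $F$ with $|F|=4\pi$ and split $\Omega = \Omega_{\mathrm{int}} \sqcup \Omega_{\mathrm{bdry}}$, with $\Omega_{\mathrm{int}}$ the union of $\Gamma$-translates of $F$ entirely contained in $\Omega$. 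The set $\Omega_{\mathrm{bdry}}$ lies in the $\mathrm{diam}(F)$-neighbourhood of $\partial\Omega$, so $|\Omega_{\mathrm{bdry}}| \leq C_0\,\isoprofile_{\unicover{\metric}}(a)$, while Gauss-Bonnet on $M$ gives $\int_{\Omega_{\mathrm{int}}} K\,dA = (1-\lambda)|\Omega_{\mathrm{int}}|$, so that
\[
\int_\Omega K\,dA = (1-\lambda)a + E(a), \qquad |E(a)| \leq C\,\isoprofile_{\unicover{\metric}}(a).
\]
Setting $u:=\isoprofile_{\unicover{\metric}}^{\,2}$ and integrating yields $u(a) = 4\pi a - (1-\lambda)a^2 + 2\int_0^a E(s)\,ds + u(0)$, and a Gronwall-type bootstrap using $|E|\leq C\sqrt{u}$ then produces the preliminary two-sided bound $\isoprofile_{\unicover{\metric}}(a) \asymp \sqrt{4\pi a - (1-\lambda)a^2}$ for all sufficiently large $a$.

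To upgrade comparability to convergence of $f(a) := \isoprofile_{\unicover{\metric}}(a)/\sqrt{4\pi a - (1-\lambda)a^2}$ to a single $C>0$, I would exploit the $\Gamma$-periodicity. Given two isoperimetric regions of areas $a_1,a_2$, suitable $\Gamma$-translates can be placed in disjoint tubular neighbourhoods of $\unicover{M}$ and joined by a thin bridge of length $O(1)$ and area $o(1)$, producing a legal competitor and hence the quasi-subadditivity
\[
\isoprofile_{\unicover{\metric}}(a_1+a_2) \leq \isoprofile_{\unicover{\metric}}(a_1) + \isoprofile_{\unicover{\metric}}(a_2) + O(1).
\]
A Fekete-type argument applied to $\isoprofile_{\unicover{\metric}}$, adapted to the relevant asymptotic scaling of the normaliser ($\sqrt{a}$ when $\lambda=1$, linear in $a$ when $\lambda\geq 2$) and combined with the comparability already established, forces $\limsup f = \liminf f$ and so furnishes the limit $C$; positivity of $C$ is immediate from the lower comparability bound.

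The main obstacle is precisely this last step. The error $E(a)$ in the ODE is only controlled by a constant multiple of $\isoprofile_{\unicover{\metric}}(a)$, which is of the \emph{same} order as the leading term in the equation; consequently direct integration of the ODE can never single out one asymptotic constant. It is the $\Gamma$-periodic structure — encoded in the quasi-subadditivity relation and not visible to the ODE alone — that enforces coherent behaviour across all large scales and selects the single limit $C$. Verifying the tube-joining construction cleanly (so that the $O(1)$ defect really is uniform in $a_1, a_2$) and establishing the appropriate Fekete statement for the $\sqrt{\cdot}$ and linear scalings is the technically delicate part of the proof.
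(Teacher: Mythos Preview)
Your approach is entirely different from the paper's. The paper simply invokes the uniformisation theorem: writing $\tilde g = \phi\, g_{1-\lambda}$ with $\phi$ a positive deck-invariant conformal factor, compactness of $M$ makes $\phi$ uniformly bounded above and below, so areas and lengths for $\tilde g$ and for the constant-curvature model $g_{1-\lambda}$ are uniformly comparable; since the model profile is exactly $\sqrt{4\pi a - (1-\lambda)a^2}$, two-sided comparability of $I_{\tilde g}$ with the model follows in two lines. In fact the paper's argument, like your ODE step, delivers only comparability rather than a genuine limit constant; the statement is somewhat informal, and only the lower comparability bound is ever used downstream (in Proposition~\ref{prop:comparison}). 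Your Gauss--Bonnet and tiling route is precisely the uniformisation-free argument the paper's own remark after the theorem says would be desirable, so the attempt is well motivated.

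There are, however, two concrete gaps. First, your claim that isoperimetric regions are discs relies on strict subadditivity of $I_{\tilde g}$ to rule out disconnected minimisers, but the concavity result you are implicitly invoking (Corollary~\ref{cor:iso_concave_nobdry}) requires $K\geq 0$, which fails in general for $\lambda\geq 1$; without it $\chi(\Omega)$ is uncontrolled and Gauss--Bonnet acquires an unknown $2\pi\chi(\Omega)$ term that wrecks the ODE. Second, the Fekete step does not work at the $\sqrt a$ scale relevant to $\lambda=1$: the bound $I(a_1+a_2)\leq I(a_1)+I(a_2)+O(1)$ naturally controls $\lim I(a)/a$, not $\lim I(a)/\sqrt a$, and when $I(a)\asymp\sqrt a$ the subadditive constraint is extremely slack (doubling $a$ multiplies $I$ by only $\sqrt 2$, far inside what subadditivity permits), so no Fekete-type lemma will pin down a single constant from that inequality alone. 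For $\lambda\geq 2$ the linear growth makes your Fekete argument plausible, but $\lambda=1$ needs a genuinely different idea.
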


\begin{proof}
By the uniformisation theorem, $\unicover{\metric}$ is conformal to a metric of constant curvature so that
\[
\unicover{\metric} = \phi\metric_{1-\genus}
\]
with $\metric_{1-\genus}$ the metric of constant curvature $1-\genus$ and $\phi$ a positive function $\phi: \unicover{M} \to \RR$ invariant under the deck transformation group of $\unicover{M}$.  Thus $\phi$ is uniformly bounded above and below.

The isoperimetric inequality for simply connected Riemannian surfaces of constant curvature $1-\genus$ implies that the isoperimetric profile $I_{1-\genus}$ of the constant curvature metric $\metric_{1-\genus}$ is given by
\[
\isoprofile_{1-\genus}(a) = \sqrt{4\pi a - (1-\genus)a^2}.
\]

Since $\unicover{\metric}$ is conformal to $\metric_{1-\genus}$ with conformal factor $\phi$ uniformly bounded, we have
\begin{align*}
\frac{1}{C_1} \abs{\bdry{\Omega}}_{\metric_{1-\genus}} &\leq \abs{\bdry{\Omega}}_{\unicover{\metric}} \leq C_1 \abs{\bdry{\Omega}}_{\metric_{1-\genus}} \\
\frac{1}{C_2} \abs{\Omega}_{\metric_{1-\genus}} &\leq \abs{\Omega}_{\unicover{\metric}} \leq C_2 \abs{\Omega}_{\metric_{1-\genus}}
\end{align*}
for constants $C_1,C_2>0$ which gives the result.
\end{proof}

\begin{remark}
It would be preferable if we didn't have to refer to the uniformisation theorem, as then the results of this paper provide a proof of the uniformisation theorem.  In the case $\genus=0$, we have such a proof since $M$ is compact.  In the case $\genus=1$, the result of \cite{MR1354290} implies that
\[
\isoprofile(a) \to C\sqrt{a}
\]
as $a\to \infty$ for $0 < C \leq 4\pi$ with $C=4\pi$ if and only if $M$ is flat.  This is precisely the required asymptotics in the theorem for $\genus=1$ surfaces obtained without requiring the use of the uniformisation theorem.  The only problem here then is for $\genus>1$ surfaces. The volume growth of $\unicover{M}$ is controlled by the number of generators for the fundamental group, but controlling the perimeter is rather more difficult. I am not aware of an applicable result for $\genus>1$ surfaces, though such a result would be interesting.
\end{remark}
\subsection{Variational Formulae and Consequences}
\label{sec-2-2}

Our techniques are based on applying the standard variational formula for isoperimetric regions, with a slight change in the second variation, obtained by applying the Gauss-Bonnet theorem.  Let us briefly recall the applicable variational formulae and describe the approach used here in obtaining the second variation.

Let $\Omega_0$ be an isoperimetric region and $\Omega_{\epsilon}$ a smooth normal variation with variational vector field $\eta \nor$ for a smooth function $\eta : \bdry{\Omega_0} \to \RR$. The first variation formulae are
\begin{align}
\label{eq:firstvar_bdry}
\pd{\epsilon} \abs{\bdry{\Omega_{\epsilon}}} &= \int_{\bdry{\Omega_{\epsilon}}} \eta \curvecurv \\
\intertext{and}
\label{eq:firstvar_vol}
\pd{\epsilon} \abs{\Omega_{\epsilon}} &= \int_{\bdry{\Omega_{\epsilon}}} \eta .
\end{align}
where $\curvecurv$ is the geodesic curvature of $\bdry{\Omega_{\epsilon}}$. In particular, the vanishing of the first variation for all functions $\eta$ such that $\int_{\bdry{\Omega_{\epsilon}}} \eta = 0$ (area preserving variations) implies that $\curvecurv$ is constant.

For the second variation, we need only consider unit-speed variations ($\eta \equiv 1$) and so immediately conclude the second variation for area,
\begin{equation}
\label{eq:secondvar_vol}
\pdd{\epsilon} \abs{\Omega_{\epsilon}} = \int_{\bdry{\Omega_{\epsilon}}} \curvecurv.
\end{equation}

For the second variation of boundary length, it suits our purposes to first apply the Gauss-Bonnet formula and then differentiate equation \eqref{eq:firstvar_bdry}.  Thus
\[
\begin{split}
\pdd{\epsilon} \abs{\bdry{\Omega_{\epsilon}}} &= \pd{\epsilon} \int_{\bdry{\Omega_{\epsilon}}} \curvecurv \\
&= \pd{\epsilon} \left(2\pi\eulerchar{\Omega(\epsilon)} - \int_{\Omega(\epsilon)}\gausscurv_M\right)
\end{split}
\]
where $\eulerchar{\Omega{\epsilon}}$ is the Euler characteristic of $\Omega_{\epsilon}$ which is independent of $\epsilon$ since each $\phi_{\epsilon}$ is a diffeomorphism and $\gausscurv_M$ is the Gauss curvature of $M$.  The latter has no explicit dependence on $\epsilon$ and so the Reynold's Transport Theorem (or differentiating under the integral sign) yields
\begin{equation}
\label{eq:secondvar_bdry}
\pdd{\epsilon} \abs{\bdry{\Omega_{\epsilon}}} = - \int_{\bdry{\Omega_{\epsilon}}} \gausscurv_M.
\end{equation}

Our approach is based on weak differential inequalities for the isoperimetric profile arising from the variational formulae. 

\begin{defn}
A function $f: (a,b) \to \RR$ has weak derivatives satisfying
\[
\pd[f^-]{x} \leq C_1 \leq \pd[f^+]{x} \quad \text{and} \quad \pdd[f]{x} \leq C_2
\]
in the \emph{support} (or sometimes \emph{Calabi}) sense at $x_0$ if $f$ supports a smooth function $\phi$ at $x_0$ ($f(x_0)=\phi(x_0)$ and $f(x) \leq x_0$ for $x$ near $x_0$) such that
\[
\pd[\phi]{x} (x_0) = C_1 \quad \text{and} \quad \pdd[\phi]{x} (x_0) = C_2.
\]
\end{defn}

\begin{prop}
[\cite{MR875084} (see also \cite{MR2229062} pp. 249-251)]
\label{prop:support_iso_nobdry}
For each $a_0 \in (0,\abs{M})$, let $\Omega_0$ be a corresponding isoperimetric region with constant curvature $\curvecurv(a_0)$ along the boundary.  Then the isoperimetric profile satisfies
\[
\pd[\isoprofile^-]{a} \leq \curvecurv(a_0) \leq \pd[\isoprofile^+]{a} \quad \text{and} \quad \pdd[\isoprofile]{a} \leq \frac{-1}{\isoprofile^2}\left(\curvecurv(a_0)^2\isoprofile + \int_{\bdry{\Omega_0}} \gausscurv_M\right).
\]
in the support sense.  Moreover, if $\gausscurv_M \geq \gausscurv_0$, the function
\[
a \mapsto \isoprofile(a)^2 + \gausscurv_0 a^2
\]
is concave, hence $\isoprofile^2$ is locally Lipschitz and in particular $\isoprofile$ is continuous.
\end{prop}

\begin{remark}
Since we are assuming $M$ is compact or co-compact, $\gausscurv_M$ is bounded hence $\isoprofile$ is continuous.  Note also that since $\sqrt{-}$ is smooth away from $0$, by Rademacher's theorem, $\isoprofile$ is differentiable almost everywhere.
\end{remark}

\begin{cor}
\label{cor:iso_concave_nobdry}
With the notation of the proposition, if $\gausscurv_0\geq0$ then $\isoprofile$ is concave and so too is $\isoprofile^2$.  If the inequality is strict, then $\isoprofile$ and $\isoprofile^2$ are strictly concave.
\end{cor}

The last results of this section concern the topology of isoperimetric regions.  We generally don't have a priori control over the topology of isoperimetric regions and so we don't know the precise form of the differential inequality for $\isoprofile$ because of the integral over the unknown regions $\Omega_0$.  However, there is a useful sufficient condition for obtaining control of the topology of isoperimetric regions.  The idea comes from \cite{MR1674097}.

\begin{lemma}
\label{thm:curvetopology}
Let $a_0\in (0,\abs{M})$ and $\Omega_0$ a corresponding isoperimetric region.  If there exists a strictly positive, strictly concave function $\phi: (0,\abs{M}) \to \RR$ supporting $I$ at $a_0$ ($\phi(a_0) = \isoprofile(a_0)$ and $\phi(a) \leq \isoprofile(a)$ for all $a\in(0,\abs{M})$), then $\Omega_0$ is connected.  If $M$ is compact then $\Omega_0$ has connected complement.
\end{lemma}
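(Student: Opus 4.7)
\medskip\noindent\emph{Proof proposal.} The plan is to argue by contradiction, converting a disconnection of $\Omega_0$ into a violation of strict subadditivity for $\phi$. Suppose $\Omega_0$ is not connected, and write $\Omega_0 = \Omega_1 \sqcup \Omega_2$ as the disjoint union of one connected component $\Omega_1$ and the union $\Omega_2$ of the remaining components. Setting $a_i = \abs{\Omega_i}$, we have $a_1 + a_2 = a_0$ with $a_1, a_2 > 0$, and since the smooth boundary $\bdry{\Omega_0}$ decomposes as the disjoint union $\bdry{\Omega_1} \sqcup \bdry{\Omega_2}$ we have $\abs{\bdry{\Omega_0}} = \abs{\bdry{\Omega_1}} + \abs{\bdry{\Omega_2}}$. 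Each $\Omega_i$ is itself an admissible region of area $a_i$, so $\abs{\bdry{\Omega_i}} \geq \isoprofile(a_i) \geq \phi(a_i)$, and combining with $\phi(a_0) = \isoprofile(a_0) = \abs{\bdry{\Omega_0}}$ yields $\phi(a_1) + \phi(a_2) \leq \phi(a_1 + a_2)$. It then suffices to show that $\phi$ is strictly subadditive.

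To establish strict subadditivity I would first observe that $\phi$ extends continuously to $0$ with $\phi(0) = 0$: since $\phi \leq \isoprofile$ and $\isoprofile(a) = O(\sqrt{a})$ as $a \to 0^+$ by Theorem~\ref{thm:bdrybehaviour}, the strictly positive $\phi$ satisfies $\phi(0^+) = 0$. Concavity of the extended $\phi$ then yields the chord bound $\phi(t) \geq t\phi(a_0)/a_0$ for $t \in [0, a_0]$. I would upgrade this to a strict inequality for $t \in (0, a_0)$ by a short contradiction argument: if $\phi(a) = a\phi(a_0)/a_0$ for some $a \in (0, a_0)$, then strict concavity of $\phi$ applied to the interior points $t$ and $a_0$ (with $t \in (0, a)$) gives $\phi(a) > \frac{a_0 - a}{a_0 - t}\phi(t) + \frac{a - t}{a_0 - t}\phi(a_0)$; substituting the assumed equality for $\phi(a)$ and rearranging produces $\phi(t) < t\phi(a_0)/a_0$, contradicting the chord bound. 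Hence $\phi(a_i) > a_i\phi(a_0)/a_0$ strictly for $i = 1, 2$, and summing gives $\phi(a_1) + \phi(a_2) > \phi(a_0)$, the desired contradiction. The hard part here is precisely that strict concavity is only hypothesised on the open interval $(0, \abs{M})$, so the strictness of the chord emanating from $(0, 0)$ cannot be obtained as a naive limit of strict inequalities; the contradiction argument is the correct route.

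For the connected-complement statement when $M$ is compact, I would reduce to the first part using the reflection symmetry $\isoprofile(a) = \isoprofile(\abs{M} - a)$. When $M$ is compact, $M \setminus \overline{\Omega_0}$ is an isoperimetric region of area $\abs{M} - a_0$, and the reflected function $\tilde{\phi}(a) := \phi(\abs{M} - a)$ is strictly positive, strictly concave (reflection being affine), and supports $\isoprofile$ at $\abs{M} - a_0$ by the symmetry above. Applying the connectedness argument to $M \setminus \overline{\Omega_0}$ with $\tilde{\phi}$ in place of $\phi$ then yields connectedness of the complement.
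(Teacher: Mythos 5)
Your proposal is correct and follows essentially the same route as the paper: decompose a disconnected $\Omega_0$, use $\phi \leq \isoprofile$ with additivity of area and perimeter, and contradict strict subadditivity of $\phi$, then handle the complement by reflecting $\phi$ about $\abs{M}/2$. The only difference is that you carefully justify strict subadditivity from strict concavity and $\phi(0^+)=0$, a step the paper simply asserts.
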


\begin{remark}
It is worth pointing out that while the conclusion of the lemma is local, pertaining to a particular value of $a_0$ and corresponding isoperimetric region, the hypotheses are global in nature in that we need a \emph{globally defined} supporting function $\phi$ (not just in a neighbourhood of $a_0$).
\end{remark}

\begin{proof}
First note that since $\phi > 0$ on $(0,\abs{M})$, $\phi \leq \isoprofile$, and $\isoprofile(0) = 0$, we have $\phi(0) = 0$. Thus since $\phi$ is strictly concave, $\phi$ is strictly subadditive.

Now suppose $\Omega_0$ is not connected.  Then we can write $\Omega_0 = \Omega_1\cup\Omega_2$ with $\Omega_1 \cap \Omega_2 = \emptyset$.  Since $\bdry{\Omega_0}$ is smooth we must have $\bdry{\Omega_0} = \bdry{\Omega_1} \cup \bdry{\Omega_2}$ and $\bdry{\Omega_1} \cap \bdry{\Omega_2} = \emptyset$.  Thus we have $\abs{\Omega_0} = \abs{\Omega_1} + \abs{\Omega_2}$ and $\abs{\bdry{\Omega_0}} = \abs{\bdry{\Omega_1}} + \abs{\bdry{\Omega_2}}$, and all of these are non-zero.  But then we get
\begin{align*}
\phi\left(\abs{\Omega_1}\right) + \phi\left(\abs{\Omega_2}\right) &\leq \abs{\bdry{\Omega_1}} + \abs{\bdry{\Omega_2}} \\
&= \abs{\bdry{\Omega_0}} \\
&= \phi\left(\abs{\Omega_0}\right) \\
&= \phi\left(\abs{\Omega_1} + \abs{\Omega_2}\right) \\
&< \phi\left(\abs{\Omega_1}\right) + \phi\left(\abs{\Omega_2}\right).
\end{align*}
This is a contradiction, so $\Omega_0$ is connected.

If $M$ is compact, then $M\setminus\Omega_0$ is also an isoperimetric region with $\abs{M\setminus\Omega_0} = \abs{M}-\abs{\Omega_0} = \abs{M} - a_0$ and $\isoprofile(\abs{M} - a_0) = \abs{\bdry{M\setminus\Omega_0}} = \abs{\bdry{\Omega_0}} = \isoprofile(a_0)$.  Reflecting $\phi$ about $a=\abs{M}/2$ gives a function satisfying the hypothesis of the proposition at $\abs{M}-a_0$ hence $M\setminus\Omega_0$ is also connected.
\end{proof}

\begin{cor}
\label{cor:sphere_iso_conn}
With the hypothesis of lemma \ref{thm:curvetopology}, if $M$ is diffeomorphic to $\sphere^2$ then $\Omega_0$ is simply connected.
\end{cor}

\begin{proof}
Follows from the Jordan curve theorem for $\sphere^2$.
\end{proof}

\begin{cor}
\label{cor:plane_iso_conn}
With the hypothesis of lemma \ref{thm:curvetopology}, if $M$ is diffeomorphic to $=\RR^2$ then $\Omega_0$ is simply connected.
\end{cor}

\begin{proof}
Since $\RR^2$ is not compact, we cannot immediately conclude that $\Omega_0$ has connected complement as before. To achieve this result, first note that if $M$ is $\RR^2$, then $\phi$ is a (strictly) positive concave function on $(0,\infty)$ and hence is strictly increasing.  Since $\Omega_0$ is connected, topologically it is a disc with finitely many discs removed.  Let $\Omega_1$ denote the interior of the external boundary of $\Omega_0$, i.e. $\Omega_1$ is equal to $\Omega_0$ with the ``holes'' filled in.  Then $\Omega_1$ has strictly larger area than $\Omega_0$ and strictly smaller boundary length.  But then
\[
\phi(\abs{\Omega_0}) = \isoprofile(\abs{\Omega_0}) = \abs{\bdry{\Omega_0}} > \abs{\bdry{\Omega_1}} \geq \isoprofile(\abs{\Omega_1}) \geq \phi(\abs{\Omega_1})
\]
contradicting that $\phi$ is increasing.  Therefore $\RR^2 \setdiff \Omega_0$ is connected and now the Jordan curve theorem implies $\Omega_0$ is simply connected.
\end{proof}

Let us finish by noting that in positive curvature, we have complete knowledge of the topology of isoperimetric regions.

\begin{cor}
\label{cor:isoregions_positive_curvature}
If $M$ is diffeomorphic to either $\sphere^2$ or $\RR^2$ (for instance if $M$ is the universal cover of a closed surface) equipped with any metric (not necessarily the pull-back from a compact surface) and $\gausscurv_0 > 0$, then all isoperimetric regions are simply connected.
\end{cor}

\begin{proof}
By Corollary \ref{cor:iso_concave_nobdry}, $\isoprofile$ is strictly concave so the hypotheses of Corollaries \ref{cor:sphere_iso_conn} and \ref{cor:plane_iso_conn} are satisfied at any $a_0\in (0,\abs{M})$ by choosing $\phi=\isoprofile$ itself.
\end{proof}

\begin{remark}
I don't know if this result can be extended to $\gausscurv_0 = 0$ since in this case $\isoprofile$ is not necessarily strictly concave.
\end{remark}
\subsection{A viscosity equation for the isoperimetric profile}
\label{sec-2-3}

The results in this section formalise some of the ideas used in \cite{MR2729306}.  We obtain a differential inequality in the viscosity sense for the isoperimetric profile of a surface.  This is somewhat dual to the results in the previous section and those in \cite{MR2041647} in that we assert conditions which \emph{lower} supporting functions must satisfy as opposed to the aforementioned results which assert the existence of an \emph{upper} supporting function with bounds on the derivatives.  The methods however, are essentially the same and the support inequality implies the viscosity inequality.  As the isoperimetric profile is defined as an extrema, viscosity equations turn out to be well suited to this situation.  Indeed, viscosity equations were introduced in \cite{MR690039} to study Hamilton-Jacobi equations, also arising from optimisation problems.  A central feature of viscosity equations, that forms the basis of the comparison theorem \ref{thm:comparison}, is that they enjoy a maximum principle. See \cite{MR1351007} for more on viscosity equations.

\begin{defn}
A lower semi-continuous function $f: (a,b) \to \RR$ is a viscosity super-solution of the 2nd order differential equation
\[
A(x, f, f', f'') = 0
\]
if for every $x_0\in (a,b)$ and every $C^2$ function $\phi$ such that $\phi(x_0) = f(x_0)$ and $\phi(x) \leq f(x)$ in a neighbourhood of $x_0$, we have $A(x_0, \phi(x_0), \phi'(x_0), \phi''(x_0)) \geq 0$.  An upper semi-continuous function is a viscosity sub-solution if the same statements hold with all the inequalities reversed.
\end{defn}

For $f$ a viscosity super(sub)-solution of $A(x, f, f', f'') = 0$, we will abuse notation slightly and write $A(x, f, f', f'') \geq 0 (\leq 0)$ (in the viscosity sense).  

\begin{remark}
In the definition, the existence of a lower (upper) supporting function at a point is not required, rather we assert that if such a supporting function exists, it must satisfy the appropriate differential inequality.  For instance, the absolute value function, $x\mapsto \abs{x}$ is a viscosity sub-solution of $f'=1$ even though no $C^1$ upper support function exists at $x=0$.
\end{remark}

\begin{theorem}
\label{thm:viscosity_nobdry}
The isoperimetric profile is a viscosity super-solution of
\begin{equation}
\label{eq:spatial_viscosity}
-\left(\isoprofile'' \isoprofile^2 + (\isoprofile')^2 \isoprofile + \int_{\bdry{\Omega_0}} \gausscurv_M\right) = 0
\end{equation}
where $\Omega_0$ is any isoperimetric region corresponding to $a_0$ ($\abs{\Omega_0} = a_0$ and $\isoprofile(a_0) = \abs{\bdry{\Omega_0}}$) and $\gausscurv_M$ is the gauss curvature of $M$.

In particular, if $\gausscurv_M \geq \gausscurv_0$ is bounded below on $M$, then
\[
-\left(\isoprofile'' \isoprofile^2 + (\isoprofile')^2 \isoprofile + \gausscurv_0 \isoprofile\right) \geq 0
\]
in the viscosity sense.
\end{theorem}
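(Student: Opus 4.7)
The plan is to derive the viscosity super-solution property from the support-sense second-order bound already established in Proposition \ref{prop:support_iso_nobdry}, via a sandwich argument that exploits the duality between upper and lower supporting functions. Fix $a_0 \in (0, \abs{M})$ and let $\Omega_0$ be a corresponding isoperimetric region. Let $\phi$ be an arbitrary $C^2$ lower support of $\isoprofile$ at $a_0$, meaning $\phi(a_0) = \isoprofile(a_0)$ and $\phi(a) \leq \isoprofile(a)$ in a neighbourhood of $a_0$. The goal is to show
\[
\phi''(a_0)\isoprofile(a_0)^2 + (\phi'(a_0))^2 \isoprofile(a_0) + \int_{\bdry{\Omega_0}} \gausscurv_M \leq 0,
\]
which is exactly the viscosity super-solution inequality for equation \eqref{eq:spatial_viscosity}.

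First I would invoke Proposition \ref{prop:support_iso_nobdry}, applied to the same isoperimetric region $\Omega_0$: the unit normal variation $\Omega_{\epsilon}$ reparametrised by enclosed area produces a smooth \emph{upper} support $\psi$ of $\isoprofile$ at $a_0$ with
\[
\psi'(a_0) = \curvecurv(a_0), \qquad \psi''(a_0) = -\frac{1}{\isoprofile(a_0)^2}\Bigl(\curvecurv(a_0)^2 \isoprofile(a_0) + \int_{\bdry{\Omega_0}} \gausscurv_M \Bigr),
\]
where $\curvecurv(a_0)$ is the (constant) geodesic curvature of $\bdry{\Omega_0}$. This is precisely the content of the support-sense bounds in the proposition, so no fresh variational calculation is needed at this step.

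Next comes the sandwich comparison. On a neighbourhood of $a_0$ the inequalities $\phi \leq \isoprofile \leq \psi$ hold with equality at $a_0$, so the smooth function $\phi - \psi$ attains a local maximum at $a_0$. The standard first and second derivative tests then yield $\phi'(a_0) = \psi'(a_0) = \curvecurv(a_0)$ and $\phi''(a_0) \leq \psi''(a_0)$. Substituting these into the target expression and using the explicit formula for $\psi''(a_0)$ makes the $\curvecurv(a_0)^2 \isoprofile$ and $\int \gausscurv_M$ contributions coming from $\psi''$ cancel exactly against the $(\phi')^2 \isoprofile$ and $\int \gausscurv_M$ terms in the target, leaving the desired inequality. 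The second assertion then follows at once: under $\gausscurv_M \geq \gausscurv_0$, the estimate $\int_{\bdry{\Omega_0}} \gausscurv_M \geq \gausscurv_0 \isoprofile(a_0) = \gausscurv_0 \phi(a_0)$ upgrades the conclusion to the viscosity super-solution inequality for $-(\isoprofile'' \isoprofile^2 + (\isoprofile')^2 \isoprofile + \gausscurv_0 \isoprofile)$.

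The hard part is really conceptual rather than computational, namely reconciling the duality flagged in the surrounding discussion: the variational computation provides information at the level of \emph{upper} supports of $\isoprofile$, while the viscosity super-solution definition tests against \emph{lower} supports. Once the sandwich step transfers the bound across this mismatch, the algebraic cancellation is immediate and no additional analysis is required beyond what Proposition \ref{prop:support_iso_nobdry} already supplies.
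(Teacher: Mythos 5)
Your argument is correct, and it reaches the conclusion by a genuinely different (more modular) route than the paper. The paper's proof does not pass through Proposition \ref{prop:support_iso_nobdry} at all: it takes the lower support $\phi$, forms $f(\epsilon) = \abs{\bdry{\Omega_{\epsilon}}} - \phi(\abs{\Omega_{\epsilon}})$ for a unit-speed normal variation of $\Omega_0$, observes that $\epsilon=0$ is a minimum of $f$, and reads the first and second variation formulae \eqref{eq:firstvar_bdry}--\eqref{eq:secondvar_bdry} directly off $f'(0)=0$ and $f''(0)\geq 0$. You instead treat the Proposition as a black box supplying a smooth \emph{upper} support $\psi$ with prescribed first and second derivatives at $a_0$, and then transfer the bound to the arbitrary lower support $\phi$ by noting that $\phi - \psi$ has a local maximum at $a_0$, giving $\phi'(a_0)=\psi'(a_0)=\curvecurv(a_0)$ and $\phi''(a_0)\leq\psi''(a_0)$; the cancellation you describe is exactly right, and the passage to the $\gausscurv_0$ version via $\int_{\bdry{\Omega_0}}\gausscurv_M \geq \gausscurv_0\abs{\bdry{\Omega_0}} = \gausscurv_0\isoprofile(a_0)$ is valid regardless of the sign of $\gausscurv_0$. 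In fact the paper flags this duality itself (``the support inequality implies the viscosity inequality'') without carrying it out, so your proof makes that remark precise. What your approach buys is that no fresh variational computation is needed; what the paper's direct computation buys is that it is self-contained and, more importantly, it is the template that is reused verbatim in Theorem \ref{thm:ricci_iso_viscosity}, where the same quantity $f(\epsilon)$ must be coupled with the time variation and a pre-packaged upper support function would be less convenient.
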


\begin{remark}
The integral term in the first equation is difficult to deal with; even though the Gauss curvature $\gausscurv$ is a given function on the ambient space $M$, we don't have any a-priori knowledge of $\Omega_0$.  Nevertheless, the first form will be the most useful to us when considering the Ricci flow, since the integral term will also appear in the time variation of isoperimetric regions under the Ricci flow allowing us to connect the spatial variational formulae with the time variational formulae.
\end{remark}

\begin{proof}
The isoperimetric profile is continuous by proposition \ref{prop:support_iso_nobdry} and the remark following it.

Let $\phi$ be a smooth function defined on a neighbourhood of $a_0 \in (0, \abs{M})$ such that $\phi\leq\isoprofile$ and $\phi(a_0) = \isoprofile(a_0)$.  Let $\Omega_0$ be an isoperimetric region corresponding to $a_0$.  Choose a unit speed normal variation of $\bdry{\Omega_0}$ and define
\[
f(\epsilon) = \abs{\bdry{\Omega_{\epsilon}}} - \phi(\abs{\Omega_{\epsilon}}).
\]
Then we have
\[
f(\epsilon) \geq \isoprofile(\abs{\Omega_{\epsilon}}) - \phi(\abs{\Omega_{\epsilon}}) \geq 0
\]
and
\[
f(0) = \abs{\bdry{\Omega_0}} - \phi(\abs{\Omega_0}) = \isoprofile(\abs{\Omega_0}) - \phi(\abs{\Omega_0}) = 0.
\]
Thus $0$ is a minima of $f$ so that $\inpd[f]{\epsilon}(0) = 0$ and $\inpdd[f]{\epsilon}(0) \geq 0$.  Now we use the first variation formula to compute
\[
\pd[f]{\epsilon} = \int_{\bdry{\Omega_{\epsilon}}} \curvecurv - \phi' \abs{\bdry{\Omega_{\epsilon}}}
\]
which at $\epsilon = 0$ gives
\[
0 = \int_{\bdry{\Omega_0}} \curvecurv - \phi'(a_0) \abs{\bdry{\Omega_0}} = (\curvecurv - \phi'(a_0)) \abs{\bdry{\Omega_0}}
\]
since $\curvecurv$ is constant along $\bdry{\Omega_0}$.  Thus $\curvecurv = \phi'(a_0)$ along $\bdry{\Omega_0}$.

The second variation gives
\[
\begin{split}
0 \leq \pdd[f]{\epsilon} &= \pdd{\epsilon} \abs{\bdry{\Omega_{\epsilon}}} - \phi'' (\pd{\epsilon} \abs{\Omega_{\epsilon}})^2 - \phi' \pdd{\epsilon} \abs{\Omega_{\epsilon}} \\
&= - \int_{\bdry{\Omega_{\epsilon}}} \gausscurv - \phi'' (\abs{\bdry{\Omega_{\epsilon}}})^2 - \phi' \int_{\bdry{\Omega_{\epsilon}}} \curvecurv \\
&= - \int_{\bdry{\Omega_0}} \gausscurv - \phi''(a_0) \phi^2(a_0) - (\phi')^2(a_0) \phi(a_0).
\end{split}
\]
recalling that $\phi(a_0) = \abs{\Omega_0}$ and using that $\curvecurv = \phi'(a_0)$ along $\bdry{\Omega_0}$ as just obtained from the first variation.
\end{proof}
\section{A comparison theorem}
\label{sec-3}
\label{9b6175ce-34f4-40c4-9b36-e7588e0a53d7}

\label{sec:comparison}
\subsection{Comparison equation under the Ricci flow}
\label{sec-3-1}

Let us now couple the spatial viscosity equation with the Ricci flow.  For this we need to know the time-variation of isoperimetric regions under the Ricci flow.  It is quite remarkable that this is possible at all and heavily relies on the fact that $M$ is $2$ dimensional.  It would be interesting to see if similar results hold in higher dimensions, though this seems unlikely unless some topological and/or curvature restrictions are imposed.

We first need the parabolic version of viscosity equations.

\begin{defn}
A lower semi-continuous function $f: (a,b) \times [0, T) \to \RR$ is a viscosity super-solution of the 2nd order parabolic equation
\[
\pd[f]{t} + A(x, t, f, f', f'') = 0
\]
if for every $(x_0, t_0) \in (a,b) \times [0, T)$ and every $C^2$ function $\phi$ such that $\phi(x_0, t_0) = f(x_0, t_0)$ and $\phi(x, t) \leq f(x, t)$ for $x$ in a neighbourhood of $x_0$ and $t\leq t_0$ near $t_0$, we have $\pd[\phi]{t} (x_0, t_0) + A(x_0, t_0, \phi, \phi' , \phi'') \geq 0$.  An upper semi-continuous function is a viscosity sub-solution if the same statements hold with the inequalities reversed.
\end{defn}

\begin{theorem}
\label{thm:ricci_iso_viscosity}
Let $M$ be a closed surface of genus $\genus$, $\metric(t)$ a solution of the normalised Ricci flow on $M$ and $\unicover{\metric}(t) = \uniproj^{\ast}\metric (t)$ the corresponding solution on the universal cover $\uniproj: \unicover{M} \to M$.  For any $a_0$, let $\chi_0$ be the Euler characteristic of $\Omega_0$ an isoperimetric region corresponding to $a_0$.  Then the isoperimetric profile, $\isoprofile_{\unicover{\metric}(t)}$ satisfies
\begin{equation}
\label{eq:temporal_viscosity}
\pd{t}\isoprofile - \left[\isoprofile'' \isoprofile^2 + (\isoprofile')^2 \isoprofile + (4\pi\chi_0 - 2(1-\genus)a)\isoprofile' + (1-\genus) \isoprofile\right] \geq 0
\end{equation}
in the viscosity sense.
\end{theorem}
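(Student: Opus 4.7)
The plan is to parabolically adapt the stationary argument of Theorem~\ref{thm:viscosity_nobdry}, coupling the spatial second variation of a fixed isoperimetric region $\Omega_0$ with the first variation of its length and enclosed area under the normalised Ricci flow. Fix a point $(a_0, t_0)$ and let $\phi$ be any $C^2$ parabolic lower support for $\isoprofile$ at $(a_0, t_0)$, so that $\phi(a_0,t_0) = \isoprofile_{\unicover{\metric}(t_0)}(a_0)$ and $\phi(a,t) \leq \isoprofile_{\unicover{\metric}(t)}(a)$ for $a$ near $a_0$ and $t \leq t_0$ near $t_0$. Choose a smooth isoperimetric region $\Omega_0 \subset \unicover{M}$ for $\unicover{\metric}(t_0)$ with $\abs{\Omega_0} = a_0$ and Euler characteristic $\chi_0$, and construct a unit-speed normal variation $\{\Omega_\epsilon\}_{\abs{\epsilon}<\delta}$ built using only the frozen metric $\unicover{\metric}(t_0)$. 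Define the two-parameter test function
\[
f(\epsilon, t) = \abs{\bdry{\Omega_\epsilon}}_{\unicover{\metric}(t)} - \phi\bigl(\abs{\Omega_\epsilon}_{\unicover{\metric}(t)},\, t\bigr).
\]
Combining the support property of $\phi$ with $\abs{\bdry{\Omega_\epsilon}}_{\unicover{\metric}(t)} \geq \isoprofile_{\unicover{\metric}(t)}(\abs{\Omega_\epsilon}_{\unicover{\metric}(t)})$ shows $f \geq 0$ on the admissible neighbourhood and $f(0,t_0) = 0$, so $(0, t_0)$ is a minimum of $f$ restricted to $t \leq t_0$.

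From this minimum I read off $\pd[f]{\epsilon}(0,t_0) = 0$, $\pdd[f]{\epsilon}(0,t_0) \geq 0$, and the one-sided inequality $\pd[f]{t}(0,t_0) \leq 0$, the last being forced by the restriction $t \leq t_0$. The two $\epsilon$-derivatives are handled exactly as in the proof of Theorem~\ref{thm:viscosity_nobdry}: the first variation forces $\curvecurv \equiv \phi'(a_0, t_0)$ along $\bdry{\Omega_0}$, and the second variation supplies the spatial relation $-\int_{\bdry{\Omega_0}} \gausscurv \geq \phi''\phi^2 + (\phi')^2 \phi$ at $(a_0,t_0)$. The new input is the time derivative, which I compute by holding $\Omega_0$ fixed as a set and letting the metric evolve by $\pd[\unicover{\metric}]{t} = -2(\gausscurv - (1-\genus))\unicover{\metric}$; here $\avg{\gausscurv} = 1 - \genus$ is a consequence of the normalisation $\abs{M} = 4\pi$ and the Gauss-Bonnet theorem on $M$. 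Standard conformal-rescaling formulae give
\[
\pd{t}\abs{\bdry{\Omega_0}}_{\unicover{\metric}(t)}\Big|_{t_0} = -\!\int_{\bdry{\Omega_0}}\!\gausscurv + (1-\genus)\phi, \qquad \pd{t}\abs{\Omega_0}_{\unicover{\metric}(t)}\Big|_{t_0} = -2\!\int_{\Omega_0}\!\gausscurv + 2(1-\genus)a_0.
\]

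The closing step is to apply Gauss-Bonnet on $\Omega_0$ together with the already-established constancy $\curvecurv = \phi'(a_0,t_0)$ to substitute $\int_{\Omega_0}\gausscurv = 2\pi\chi_0 - \phi'\phi$ into the area evolution. Feeding both time derivatives through the chain rule expansion of $\pd[f]{t}(0,t_0)$, imposing $\pd[f]{t}(0,t_0) \leq 0$, and finally invoking the spatial second-variation estimate to eliminate the inaccessible integral $\int_{\bdry{\Omega_0}}\gausscurv$, produces precisely the claimed parabolic viscosity inequality for $\phi$ at $(a_0,t_0)$. The main technical subtlety, and the reason Theorem~\ref{thm:viscosity_nobdry} was deliberately stated to retain the boundary integral $\int_{\bdry{\Omega_0}}\gausscurv$, is that this curvature-weighted perimeter term is not a priori accessible yet must cancel cleanly between the length-evolution contribution and the spatial viscosity contribution; once this cancellation is engineered, every remaining quantity is pointwise and explicit in $\phi, \phi', \phi''$, $\chi_0$, $\genus$ and $a_0$.
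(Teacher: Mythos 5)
Your proposal is correct and follows essentially the same route as the paper: the paper likewise differentiates the support inequality $\abs{\bdry{\Omega_0}}_t \geq \phi(\abs{\Omega_0}_t, t)$ in $t$ at the fixed region $\Omega_0$, computes the same evolution formulae for length and area under the normalised flow, applies Gauss--Bonnet with $\curvecurv \equiv \phi'(a_0)$ to the area term, and eliminates $\int_{\bdry{\Omega_0}}\gausscurv$ via the spatial inequality of Theorem~\ref{thm:viscosity_nobdry}. The only cosmetic difference is that you package the spatial and temporal variations into a single two-parameter function $f(\epsilon,t)$, whereas the paper invokes the already-proved spatial viscosity theorem for the $\epsilon$-direction.
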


\begin{proof}
For convenience sake, let us write $\abs{\cdot}_t = \abs{\cdot}_{\unicover{\metric}(t)}$ and $\isoprofile_t = \isoprofile_{\unicover{\metric}(t)}$.  Let $\phi$ be a $C^2$ function such that $\phi(a_0, t_0) = \isoprofile_{t_0}(a_0)$ and $\phi \leq \isoprofile$ for $a$ near $a_0$ and $t\leq t_0$ near $t_0$.  We need to show that $\phi$ satisfies the differential inequality \eqref{eq:temporal_viscosity}.

We compute the time variation of isoperimetric regions.  Given $a_0$, let $\Omega_0\subset \unicover{M}$ be an isoperimetric region in $\unicover{M}$ with respect to the metric $\unicover{\metric}(t_0)$.  That is $\abs{\Omega_0}_{t_0} = a_0$ and $\abs{\bdry{\Omega_0}}_{t_0} = \phi(a_0, t_0)$.  Since $\isoprofile_t(a) \geq \phi(a,t)$ for $t\leq t_0$ and $a$ near $a_0$, we have
\[
\abs{\bdry{\Omega_0}}_t \geq \phi\left(\abs{\Omega_0}_t, t\right)
\]
for $t\leq t_0$, and equality holds when $t=t_0$.  Since both sides of this equation are differentiable in $t$, it follows that under the normalised Ricci flow,
\begin{equation}\label{eq:timeineq1}
\frac{\partial}{\partial t}\Big|_{t=t_0} \abs{\bdry{\Omega_0}}_t \leq \pd[\phi]{t} (a_0,t_0) + \phi'(a_0,t_0) \pd{t}\Big|_{t=t_0} \abs{\Omega_0}_{t}.
\end{equation}
The time derivative on the left can be computed as follows: Parametrise $\bdry{\Omega_0}$ by $\gamma: u \in \sphere^1 \mapsto M$ and write $\gamma_u = \gamma_{\ast} \pd{u}$.  Then recalling that the metric evolves by the normalised Ricci flow, $\pd{t}\unicover{\metric} = -2(\gausscurv - (1-\genus))\unicover{\metric}$, we obtain
\begin{align*}
\pd{t}\Big|_{t=t_0} \abs{\bdry{\Omega_0}} &= \pd{t} \int_{\bdry{\Omega_0}} \sqrt{\unicover{\metric}_t(\gamma_u,\gamma_u)}\,du = -\int_{\bdry{\Omega_0}}(\gausscurv_M - (1 - \genus))ds \\
& = -\int_{\bdry{\Omega_0}} \gausscurv_M\,ds + (1-\genus)\phi (a_0, t_0),
\end{align*}
where $ds$ is the arc-length element along $\bdry{\Omega_0}$.

For the right hand side, by differentiating the determinant and using the normalised Ricci flow equation again, we have $\pd{t} \measure_{\unicover{\metric}} = -2(\gausscurv_M - (1-\genus))\measure_{\unicover{\metric}}$ where $\measure_{\unicover{\metric}}$ is the measure on $\unicover{M}$ induced by the metric $\unicover{\metric}$. Thus,
\[
\pd{t} \Big|_{t=t_0} \abs{\Omega_0}_t = -2\int_{\Omega_0}(\gausscurv_M - (1-\genus) d\measure_{\unicover{\metric}(t_0)}.
\]
Writing $\chi_0 = \chi(\Omega_0)$ the Euler characteristic of $\Omega_0$ and applying the Gauss-Bonnet theorem yields
\[
\pd{t}\Big|_{t=t_0} \abs{\Omega_0}_t = 2(1-\genus)\abs{\Omega_0} - 2\left(2\pi\chi_0 - \int_{\bdry{\Omega_0}} \curvecurv\,ds\right) = 2(1-\genus)a_0 -4\pi\chi_0 +2 \int_{\bdry{\Omega_0}}\curvecurv\,ds,
\]
were $\curvecurv$ is the geodesic curvature of the curve $\bdry{\Omega_0}$.  Thus the inequality \eqref{eq:timeineq1} becomes
\begin{equation}\label{eq:timeineq}
-\int_{\bdry{\Omega_0}}\gausscurv_M\,ds + (1-\genus)\phi \leq \pd{t}\phi + \phi'\left(2(1-\genus)a_0 -4\pi\chi_0 + 2 \int_{\bdry{\Omega_0}}\curvecurv\,ds\right).
\end{equation}

Now recall that theorem \ref{thm:viscosity_nobdry} states that for each time $t$, the isoperimetric profile $\isoprofile_t$ satisfies
\[
-\left(\isoprofile'' \isoprofile^2 + (\isoprofile')^2 \isoprofile + \int_{\bdry{\Omega_0}} \gausscurv_M\right) \geq 0
\]
in the viscosity sense.  Since at $a_0$, $\phi(-, t_0)$ is a supporting function for $\isoprofile_{t_0}(-)$ we also have
\begin{equation}
\label{eq:spaceineq}
\phi'' \phi^2 + (\phi')^2 \phi \leq -\int_{\bdry{\Omega_0}} \gausscurv_M.
\end{equation}
Also, the vanishing of the first spatial variation gives $\curvecurv = \phi'(a_0)$ is constant along $\bdry{\Omega_0}$ and so
\begin{equation}
\label{eq:curvfirstvar}
\int_{\bdry{\Omega_0}} \curvecurv \, ds = \phi(a_0) \phi' (a_0).
\end{equation}

Putting together the inequalities \eqref{eq:timeineq} and \eqref{eq:spaceineq} and using \eqref{eq:curvfirstvar} we obtain
\begin{equation}
\begin{split}
\pd[\phi]{t} &\geq -\int_{\bdry{\Omega_0}}\gausscurv_M\,ds + (1-\genus)\phi - \phi'\left(2(1-\genus)a_0 - 4\pi\chi_0 + 2\phi\phi'\right) \\
&\geq \phi'' \phi^2 + (\phi')^2 \phi + (1-\genus)\phi + \phi'\left(4\pi\chi_0 - 2(1-\genus)a_0\right) - 2\phi(\phi')^2 \\
&= \phi'' \phi^2 - (\phi')^2 \phi + (1-\genus)\phi + \phi'\left(4\pi\chi_0 - 2(1-\genus)a_0\right)
\end{split}
\end{equation}
which is the required inequality.
\end{proof}

\begin{remark}
The viscosity equation includes the $\chi_0$ term which, without any topological knowledge of isoperimetric regions is essentially unknown and could a priori, take on any possible value.  By Corollary \ref{cor:isoregions_positive_curvature}, in the particular case that $\gausscurv_M > 0$, we may conclude that $\chi_0 = 1$ for all $a_0$.  In general however, we need not expect any particular bound on Euler characteristic from a curvature bound alone.
\end{remark}

Even though the topological uncertainty is a real problem, for our purposes we may avoid it entirely by appealing to the underlying concavity of the isoperimetric profile.  This is exploited in the next theorem, the comparison theorem, which is the central result of this paper.

\begin{theorem}
\label{thm:comparison}
Let $(M, \metric(t))$, $(\unicover{M}, \unicover{\metric}(t))$ be as in the previous theorem.  Let $\phi: (0, \abs{\unicover{M}}) \times [0, T) \to \RR$ be a smooth, strictly positive, strictly concave function satisfying
\begin{equation}
\label{eq:ricci_iso_diff_inequal}
\pd[\phi]{t} \leq \phi'' \phi^2 - (\phi')^2 \phi + \phi'\left(4\pi - 2(1-\genus)a_0\right) + (1-\genus)\phi.
\end{equation}
along with the asymptotic behaviour
\[
\limsup_{a\to 0} \frac{\phi(a, t)}{\sqrt{4\pi a}} \leq  1
\]
and
\[
\limsup_{a\to \infty} \left(\isoprofile(a, t) - \phi(a,t)\right) \geq 0
\]

Then if the initial inequality, $\phi(a, 0) < \isoprofile_{\unicover{\metric}(0)} (a)$ for all $a \in (0, \abs{\unicover{M}})$ holds, the inequality $\phi(a, t) \leq \isoprofile_{\unicover{\metric}(t)} (a)$ holds for all $a, t$ with strict inequality if the inequality in \eqref{eq:ricci_iso_diff_inequal} is strict.
\end{theorem}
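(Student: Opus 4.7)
The plan is to run a parabolic comparison argument: suppose the desired inequality fails and examine the first touching time. Because the differential inequality for $\phi$ in \eqref{eq:ricci_iso_diff_inequal} is not strict, I first perturb $\phi$ slightly to produce a strict version, prove the comparison there, and then pass to the limit. A natural choice is $\phi_\epsilon(a,t) = e^{-\epsilon t}\phi(a,t)$ for small $\epsilon > 0$: this preserves strict positivity, strict concavity in $a$, and the two asymptotic conditions (strengthening both for $t>0$). A direct calculation, using strict concavity $\phi'' < 0$ together with $\phi > 0$, shows that $\phi_\epsilon$ satisfies \eqref{eq:ricci_iso_diff_inequal} strictly, with an extra negative term proportional to $\epsilon$.

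Next, define
\[
t_0 = \sup\{t \in [0,T) : \phi_\epsilon(a,s) < \isoprofile_{\unicover{\metric}(s)}(a)\ \text{for all}\ a \in (0,\abs{\unicover{M}})\ \text{and}\ s \in [0,t]\}
\]
and suppose for contradiction that $t_0 < T$. By continuity, $\phi_\epsilon \leq \isoprofile$ on $(0,\abs{\unicover{M}})\times[0,t_0]$ with equality at some touching point $(a_0, t_0)$. The boundary asymptotics of $\isoprofile$ from Theorem \ref{thm:bdrybehaviour} (uniform on $[0,t_0]$ by smoothness of $\unicover{\metric}(t)$) combined with $\limsup_{a\to 0}\phi_\epsilon/\sqrt{4\pi a} \leq e^{-\epsilon t_0} < 1$ rule out $a_0 = 0$; similarly, the hypothesis $\limsup_{a\to\infty}(\isoprofile-\phi) \geq 0$ together with $\phi_\epsilon < \phi$ for $t>0$ and Theorem \ref{thm:asymptotics_large} rule out $a_0 = \infty$ in the noncompact case. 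So $a_0$ is an interior point.

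At $(a_0,t_0)$, the function $\phi_\epsilon(\cdot, t_0)$ is a strictly positive, strictly concave function on $(0,\abs{\unicover{M}})$ supporting $\isoprofile_{t_0}$ from below at $a_0$. Since $\unicover{M}$ is diffeomorphic to either $\sphere^2$ or $\RR^2$, Corollaries \ref{cor:sphere_iso_conn} and \ref{cor:plane_iso_conn} apply and force any corresponding isoperimetric region $\Omega_0$ to be simply connected, so $\chi_0 = \chi(\Omega_0) = 1$. Plugging this into the viscosity supersolution property from Theorem \ref{thm:ricci_iso_viscosity} with test function $\phi_\epsilon$ gives
\[
\pd[\phi_\epsilon]{t}(a_0,t_0) \geq \phi_\epsilon''\phi_\epsilon^2 - (\phi_\epsilon')^2\phi_\epsilon + (4\pi - 2(1-\genus)a_0)\phi_\epsilon' + (1-\genus)\phi_\epsilon,
\]
which directly contradicts the strict version of \eqref{eq:ricci_iso_diff_inequal} that $\phi_\epsilon$ satisfies at $(a_0,t_0)$. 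Hence $\phi_\epsilon \leq \isoprofile$ on $[0,T)$, and letting $\epsilon \to 0$ gives the desired conclusion. The strict-case conclusion is obtained by the same argument without the perturbation: if \eqref{eq:ricci_iso_diff_inequal} is already strict for $\phi$, any putative touching point immediately yields the same contradiction.

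The principal obstacle is the unknown topological term $4\pi\chi_0$ in the viscosity supersolution satisfied by $\isoprofile$: without a priori control on the Euler characteristic of isoperimetric regions, the operator driving $\isoprofile$ differs from the operator driving $\phi$, and a direct comparison is impossible. The key observation that resolves this is that the strict-positivity and strict-concavity hypotheses on $\phi$—which may appear as mere regularity assumptions—are exactly the hypotheses of Lemma \ref{thm:curvetopology} and its corollaries, and hence force $\chi_0 = 1$ at the critical touching point, aligning the two operators so that the parabolic maximum principle closes.
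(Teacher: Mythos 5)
Your proposal is correct and follows essentially the same route as the paper: argue at the first touching point, invoke Lemma \ref{thm:curvetopology} (via the strict positivity and concavity of the comparison function) to force $\chi_0=1$, and contradict the viscosity inequality of Theorem \ref{thm:ricci_iso_viscosity}, then remove the strictness assumption by a small perturbation. The only cosmetic difference is your perturbation $e^{-\epsilon t}\phi$ versus the paper's $(1-\epsilon)\phi$; both work for the same reason, namely that strict concavity and positivity make the nonlinear term $\phi''\phi^2-(\phi')^2\phi$ strictly negative.
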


\begin{remark}
The large scale asymptotic requirements are rather imprecise because we don't have a priori control over the constant $C$ in Theorem \ref{thm:asymptotics_large}. However, this will not prove problematic for us by Proposition \ref{prop:comparison} below.
\end{remark}

\begin{proof}
First suppose that we have strict inequality in the differential inequality and in the asymptotic inequalities. We argue by contradiction.  The conditions $\phi(a, 0) < \isoprofile_{\unicover{\metric}(0)} (a)$ and $\phi(a, t) < \isoprofile_{\unicover{\metric}(t)} (a)$ for $a$ sufficiently close to $\{0, \abs{\unicover{M}}\}$ imply that if the theorem is false, there is a first time $t_0>0$ and an $a_0 \in (0, \abs{\unicover{M}})$ such that $\phi(a_0, t_0) = \isoprofile_{t_0} (a_0)$.  Thus $\phi(a, t) \leq \isoprofile_t(a)$ for $t\leq t_0$ with equality at $(a_0, t_0)$.  Since $\phi$ is strictly concave, the hypotheses of Lemma \ref{thm:curvetopology} are satisfied, so $\Omega_0$ is simply connected and $\chi_0 = 1$.

But now observe that $\phi$ is a lower supporting function for $\isoprofile_t$ at $a_0$ and by theorem \ref{thm:ricci_iso_viscosity},
\[
\pd[\phi]{t} \geq \phi'' \phi^2 - (\phi')^2 \phi + \phi'\left(4\pi - 2(1-\genus)a_0\right) + (1-\genus)\phi
\]
a contradiction, hence the theorem is true when the inequalities are strict.

If any of the inequalities are not strict, define
\[
\phi_{\epsilon} = (1-\epsilon) \phi
\]
for any $\epsilon$ with $0<\epsilon<1$. Then we have $\phi_{\epsilon} < \phi$ giving strict inequality for the asymptotics. We also have
\[
\begin{split}
&\pd[\phi_{\epsilon}]{t} - (\phi_{\epsilon}'' \phi_{\epsilon}^2 - (\phi_{\epsilon}')^2 \phi_{\epsilon}) - \phi_{\epsilon}'\left(4\pi - 2(1-\genus)a_0\right) - (1-\genus)\phi_{\epsilon} \\
&= (1-\epsilon)\left(\pd[\phi]{t} - (1-\epsilon)^2(\phi'' \phi^2 - (\phi')^2 \phi) - \phi'\left(4\pi - 2(1-\genus)a_0\right) - (1-\genus)\phi\right) \\
&\leq \epsilon(1-\epsilon)(2-\epsilon)(\phi^2\phi'' - (\phi')^2\phi) \\
&< 0
\end{split}
\]
since $\phi''< 0$.

Thus $\phi_{\epsilon} (a,t) < \isoprofile (a,t)$ by the result for strict inequalities and the result follows by letting $\epsilon \to 0$.
\end{proof}

\begin{remark}
It's not entirely clear whether strict concavity may be relaxed to merely concavity. A strictly concave approximation to $\phi$ may increase $\phi$ violating the inequality $\phi \leq \isoprofile$.
\end{remark}

Using the theorem, and the asymptotics of $\isoprofile$ from Theorem \ref{thm:bdrybehaviour},
\[
\isoprofile(a) = \sqrt{4 \pi a}(1 - \frac{\sup_M\gausscurv}{8\pi} a + O(a^2))
\]
as $a\to 0$, we may now obtain a curvature bound for $\unicover{\metric}(t)$ and hence for $\metric(t)$.

\begin{cor}
\label{cor:ricci_comp_curv_bnd}
With the notation of the previous theorem, $\phi$ satisfying the hypothesis of the theorem and such that
\[
\phi (a, t) = \sqrt{4\pi a}(1 - \frac{\gausscurv_0(t)}{8\pi} a + O(a^2)),
\]
we have
\[
\sup_M \gausscurv_M(t) \leq \gausscurv_0(t).
\]
\qed
\end{cor}

The isoperimetric constant of a non-compact surface is defined to be
\[
\isoconst = \inf\left\{ \frac{\abs{\bdry{\Omega}}^2}{\abs{\Omega}} : \Omega \quad \text{admissible} \right\} = \inf \left\{\frac{\isoprofile(a)^2}{a} : 0 < a < \infty \right\}.
\]
For a compact surface, the (modified) isoperimetric constant is defined by
\[
\isoconst = \inf\left\{ \frac{\abs{\bdry{\Omega}}^2}{\min(\abs{\Omega}, \abs{M \setdiff \Omega})} : \Omega \quad \text{admissible} \right\} = \inf \left\{ \frac{\isoprofile(a)^2}{a} : 0 < a < \frac{\abs{M}}{2} \right\}.
\]

\begin{cor}
\label{cor:isoconst_bdd}
With the notation of the previous theorem and, $\phi$ satisfying the hypothesis of the theorem we have
\[
\isoconst_{\unicover{M}}(t) \geq \inf\left\{\frac{\phi(a, t)^2}{a} : 0 < a < \frac{\abs{\tilde{M}}}{2}\right\}.
\]
\end{cor}

\begin{remark}
\label{rem:isoconst_bdd}
Note that area (2 dimensional volume) on $\unicover{M}$ equipped with the pull-back metric $\pullback{\uniproj} \metric$ grows like the growth of the fundamental group, but boundary length can't be controlled so easily. For instance, the torus with arbitrarily small ratio of principal radii may be equipped with the flat metric, giving control of the isoperimetric constant on $\unicover{M}$, but with arbitrarily small isoperimetric constant on $M$. Note however, that by normalising the area of $M$ to $4\pi$, we avoid this issue, and I conjecture that the isoperimetric constant of $M$ may be bounded below by that of $\unicover{M}$ for any $\genus \geq 1$. For the matter at hand, when $\genus>0$ (so that $\unicover{M}$ is not compact), we can't immediately transfer control of the isoperimetric constant on $\unicover{M}$ to control of the isoperimetric constant on $M$.
\end{remark}

Let us finish this section by recording a useful result for surfaces of genus $\genus \geq 1$ that shows the large scale asymptotics of $\phi$ are superfluous.

\begin{prop}
\label{prop:comparison}
Let $M$ be a closed surface of genus $\geq 1$ (so that $\unicover{M}$ is not compact).  Let $\phi$ be a strictly positive, strictly concave function satisfying the differential inequality \eqref{eq:ricci_iso_diff_inequal} and the small scale asymptotics from the comparison theorem \ref{thm:comparison}. Then if $\phi(a, 0) < \isoprofile_{\unicover{\metric}(0)}(a)$ for all $a\in (0,\infty)$, then $\phi(a, t) \leq \isoprofile_{\unicover{\metric}(t)} (a)$ for all $a,t$.
\end{prop}

\begin{proof}
The only thing missing from Theorem \ref{thm:comparison} is the large scale asymptotics. It is convenient to work with the function $v=\phi^2$.  This satisfies
\begin{equation}
\label{eq:ricci_isosq_diff_inequal}
\pd[v]{t} \leq v^2 \laplace \ln v + (4\pi - 2(1-\genus)a) v' + 2(1-\genus) v.
\end{equation}
For any $C>0$, define 
\[
u_C(a, t) = C e^{2(1-\genus)t}.
\]
Then $u_C$ satisfies equality in equation \eqref{eq:ricci_isosq_diff_inequal}.  Since $u_C$ is constant for each fixed $t$ and $\isoprofile$ grows at least linearly as $a\to\infty$ by Theorem \ref{thm:asymptotics_large}, we have also have $u_C(a, t) < \isoprofile_{\unicover{\metric}(t)}(a)$ for all $a$ large enough.  Now take the harmonic mean,
\[
\harmean (a, t) = \left(\frac{1}{v(a,t)} + \frac{1}{u_C(a,t)}\right)^{-1}.
\]
This has the property that for any $(a,t)$ we have
\[
v(a, t) = \lim_{C\to\infty} \frac{v(a, t) u_C(a, t)}{v(a, t) + u_C(a, t)} = \lim_{C\to\infty} \harmean (a, t).
\]
Therefore to prove the result, we need to show $\harmean$ satisfies the hypotheses of theorem \ref{thm:comparison} since this will give the inequality for $\harmean$ for every $C>0$ and so too for $v$ being the limit $C\to\infty$ of $\harmean$.

First, since $0 < \harmean \leq v, u_C$, the initial inequality $\harmean < \isoprofile_0$ is satisfied along with the necessary small and large scale asymptotics. 

For strict concavity of $\harmean$, we use 
\[
\harmean = \frac{u_C v}{u_C + v}
\]
and $(u_C)' = 0$ to compute
\begin{align*}
\harmean' &= \frac{u_Cv'}{u_C + v} - \frac{u_C v v'}{(u_C + v)^2} \\
&= \frac{u_C^2 v'}{(u_C + v)^2}
\end{align*}
and so
\[
\harmean'' = \frac{u_C^2 v''}{(u_C + v)^2} - \frac{2u_C^2(v')^2}{(u_C + v)^3} < 0
\]
by strict concavity of $v$ and positivity of $v, u_C$.  Thus $H$ is strictly concave.  

Now let us consider the differential inequality.  Define
\[
L_{\pm} = \left(4\pi - 2(1-\genus)a\right)\pd{a} \pm 2(1-\genus).
\]
The differential inequality \eqref{eq:ricci_isosq_diff_inequal} then reads
\[
\left(\pd{t} - L_-\right) v \leq v^2 \laplace \ln v.
\]
For any function $f$ we have
\begin{equation}
\label{eq:linopf}
\left(\pd{t} - L_{\pm}\right) \frac{1}{f} = -\frac{1}{f^2} \left(\pd{t} - L_{\mp}\right) f.
\end{equation}
Applying equation \eqref{eq:linopf} to $\harmean = 1/f$ with $f = v^{-1} + u_C^{-1}$ gives
\begin{align*}
\left(\pd{t} - L_-\right) \harmean &= -\harmean^2 \left((\pd{t} - L_+) \frac{1}{v} + (\pd{t} - L_+) \frac{1}{u_C}\right) \\
&= -\harmean^2 \left((\pd{t} - L_+) \frac{1}{v} - 2(1-\genus)\frac{1}{u_C}\right)
\end{align*}
since $L_{\pm} (u_C) = 0$ and $u_C$ satisfies equality in \eqref{eq:ricci_isosq_diff_inequal}.

Next applying equation \eqref{eq:linopf} to $v^{-1}$ we get
\begin{align*}
\left(\pd{t} - L_-\right) \harmean &= \frac{\harmean^2}{v^2} (\pd{t} - L_-) v + 2\harmean^2 (1-\genus)\frac{1}{u_C} \\
&\leq \frac{\harmean^2}{v^2} v^2 \laplace \ln v \\
&= \harmean^2 \laplace \ln v.
\end{align*}
since $(1-\genus) \leq 0$. Here the inequality is strict if $v$ (or equivalently $u$) satisfies strict inequality in the differential inequality.  We want to show the right hand side is less than or equal to $\harmean^2 \laplace \ln \harmean$.  We compute
\begin{align*}
\harmean^2 \laplace \ln \harmean &= \harmean \harmean'' - (\harmean')^2 \\
&= \frac{v u_C}{v+u_C} \left[\left(\frac{u_C}{v+u_C}\right)^2 v'' - \frac{2u_C^2}{(v+u_C)^3} (v')^2\right] - \left(\frac{u_C}{v+u_C}\right)^4 (v')^2 \\
&= \left(\frac{v u_C}{v + u_C}\right)^2 \left[\left(\frac{u_C}{v+u_C}\right) \frac{v''}{v} - \frac{2u_C v}{(v+u_C)^2} \frac{(v')^2}{v^2} - \frac{u_C^2}{(v+u_C)^2} \frac{(v')^2}{v^2}\right] \\
&= \harmean^2 \left[\left(\frac{u_C}{v+u_C}\right) \frac{v''}{v} - \left(\frac{(v+u_C)^2 - v^2}{(v+u_C)^2}\right) \frac{(v')^2}{v^2}\right] \\
&\geq \harmean^2 \left[ \frac{v''}{v} - \frac{(v')^2}{v^2}\right] = \harmean^2 \laplace \ln v
\end{align*}
where the inequality follows from the concavity of $v$ and the positivity of $v$ and $u_C$.
\end{proof}
\subsection{A connection with logarithmic porous media}
\label{sec-3-2}

For positive functions $\phi$, the differential inequality
\[
\pd[\phi]{t} < \phi^2\phi'' - \phi(\phi')^2 + (4\pi - 2(1-\genus)a)\phi' + (1-\genus)\phi
\]
is equivalent to the logarithmic porous media inequality
\[
\pd[u]{t} > \laplace \ln u.
\]
To see this, observe that
\[
\phi^3 \laplace \ln \phi = \phi^2\phi'' - \phi(\phi')^2.
\]
Letting $u = \phi^{-2}$ we have $\laplace \ln u = -2 \laplace \ln \phi$ and so
\[
\begin{split}
\pd[u]{t} &= \frac{-2}{\phi^3} \pd[\phi]{t} > \frac{-2}{\phi^3} \left[\phi^3 \laplace \ln \phi + (4\pi - 2(1-\genus)a)\phi' + (1-\genus)\phi\right] \\
&= \laplace \ln u + (4\pi - 2(1-\genus)a)u'  -2(1-\genus)u.
\end{split}
\]
A change of the independent variables $(a,t)$ can now be made to get rid of the lower order terms.  This point of view may prove useful since the logarithmic porous media equation has been extensively studied, but we do not use it here.
\section{Model solutions}
\label{sec-4}
\label{396a40e0-d4d8-422d-a69b-4e25da6de9c3}

\label{sec:models}

This section is devoted to exhibiting suitable comparison functions $\phi$ and hence curvature and isoperimetric bounds for metrics evolving by the normalised Ricci flow via Corollaries \ref{cor:ricci_comp_curv_bnd} \ref{cor:isoconst_bdd}. We will need to treat the cases $\genus=0, \genus=1, \genus>1$ separately.  The next and final section briefly outlines how such bounds lead, via standard arguments, to the convergence results described in Theorem \ref{thm:convergence}.
\subsection{genus 0}
\label{sec-4-1}

In \cite{MR2729306}, we showed that the isoperimetric profile of the Rosenau solution provided a suitable comparison solution.  Let us briefly recall the result.  The Rosenau solution is an explicit axially symmetric solution of the normalized Ricci flow on the two-sphere.  The metric is given by $\bar g(t) = u(x,t)(dx^2+dy^2)$, where $(x,y)\in\RR\times[0,4\pi]$, and
\[
u(x,t) = \frac{\sinh(e^{-2t})}{2e^{-2t}\left(\cosh(x)+\cosh(e^{-2t})\right)}.
\]
This extends to a smooth metric on the two-sphere at each time with area $4\pi$, and which evolves according to the normalized Ricci flow equation \eqref{eq:NRF}.  A direct computation gives the isoperimetric profile,
\begin{equation}
\label{eq:Rosenauprofile}
\varphi(a,t) = \sqrt{4\pi}\sqrt{\frac{\sinh(a e^{-2t})\sinh((1-a)e^{-2t})}{\sinh(e^{-2t})e^{-2t}}}.
\end{equation}

By translating $t \mapsto t - t_0$ with $t_0$ chosen so that initial inequality of the isoperimetric profile holds, the comparison theorem leads to the following bounds for solutions of the normalised Ricci flow on the $2$-sphere:

\begin{theorem}
\label{thm:sphere_bounds}
Let $\metric$ be a solution of the normalised Ricci flow on $\sphere^2$.  Then there exists constants $A,C>0$ depending only on the metric at the initial time such that 
\[
\sup_{\sphere^2} \gausscurv (t) \leq C e^{-At}.
\]
There also exists a constant $\isoconst_0 > 0$, depending only on the initial metric $\metric_0$, such that
\[
\isoconst (t) > \isoconst_0
\]
where $\isoconst(t)$ is the isoperimetric constant of $(\sphere^2, \metric(t))$.
\end{theorem}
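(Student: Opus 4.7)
The plan is to use a time-translated Rosenau isoperimetric profile $\phi(a,t) := \varphi(a, t - t_0)$, with $\varphi$ given by \eqref{eq:Rosenauprofile}, as the comparison function; invoke Theorem \ref{thm:comparison} to obtain $\phi(a,t) \leq \isoprofile_{\metric(t)}(a)$ for all $t \geq 0$; and then read the curvature bound off the small-$a$ expansion of $\phi$ via Corollary \ref{cor:ricci_comp_curv_bnd} and the isoperimetric-constant bound from Corollary \ref{cor:isoconst_bdd}.

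First I would verify the hypotheses of Theorem \ref{thm:comparison} for $\phi$ with $\genus = 0$. Since the Rosenau metric is a smooth solution of the normalised Ricci flow with strictly positive Gauss curvature on $\sphere^2$, Corollary \ref{cor:isoregions_positive_curvature} ensures $\chi_0 = 1$ for its isoperimetric regions, so the variational identities behind Theorem \ref{thm:ricci_iso_viscosity} become equalities along the realised profile and $\varphi$ satisfies the differential inequality \eqref{eq:ricci_iso_diff_inequal} with equality. Strict positivity, strict concavity, and the endpoint asymptotics follow from direct inspection of \eqref{eq:Rosenauprofile} together with Theorem \ref{thm:bdrybehaviour}. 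To secure the strict initial inequality $\phi(a, 0) < \isoprofile_{\metric_0}(a)$ for all $a$, choose $t_0$ sufficiently large: on any compact subinterval of $(0, \abs{\sphere^2})$, the profile $\varphi(\cdot, -t_0)$ becomes arbitrarily small as $t_0 \to \infty$ since the Rosenau metric develops a thin neck backwards in time, while $\isoprofile_{\metric_0}$ is bounded below there; near the endpoints, both profiles begin like $\sqrt{4\pi a}$, but the subleading correction for $\varphi(a, -t_0)$, governed by $\sup \gausscurv$ of the Rosenau metric at time $-t_0$, grows without bound and overwhelms the fixed subleading term of $\isoprofile_{\metric_0}$ coming from Theorem \ref{thm:bdrybehaviour}.

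With these hypotheses in hand, Theorem \ref{thm:comparison} yields $\varphi(a, t - t_0) \leq \isoprofile_{\metric(t)}(a)$ for all $t \geq 0$. Expanding $\varphi(a, t-t_0)$ for small $a$ by Taylor expanding the $\sinh$ factors in \eqref{eq:Rosenauprofile} puts $\phi$ in the form $\sqrt{4\pi a}\bigl(1 - \gausscurv_0(t)\,a/(8\pi) + O(a^2)\bigr)$, so Corollary \ref{cor:ricci_comp_curv_bnd} gives $\sup_{\sphere^2}\gausscurv(t) \leq \gausscurv_0(t)$; a direct calculation shows $\gausscurv_0(t)$ decays exponentially, with rate inherited from the $e^{-2t}$ scaling in the Rosenau formula, yielding the claimed $Ce^{-At}$ bound. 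For the isoperimetric constant, Corollary \ref{cor:isoconst_bdd} gives $\isoconst(t) \geq \inf_{0 < a < \abs{\sphere^2}/2} \varphi(a, t-t_0)^2/a$, and since $\varphi(\cdot, t-t_0)$ converges uniformly on compact subintervals to the strictly positive round-sphere profile as $t \to \infty$ and remains uniformly bounded away from zero for all $t \geq 0$, this infimum is bounded below by some $\isoconst_0 > 0$ depending only on $t_0$ and hence only on $\metric_0$. The main obstacle is the arrangement of the strict initial inequality uniformly at the endpoints $a \to 0, \abs{\sphere^2}$ where both profiles vanish and one has to match two subleading asymptotic expansions; once this delicate endpoint matching is done, the rest of the argument reduces to explicit computation and invocation of the comparison machinery already developed.
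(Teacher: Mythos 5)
Your proposal follows the paper's own route exactly: the paper proves Theorem \ref{thm:sphere_bounds} by taking the time-translated Rosenau profile $\varphi(a,t-t_0)$ of \eqref{eq:Rosenauprofile} as the comparison function in Theorem \ref{thm:comparison}, choosing $t_0$ large to secure the initial inequality, and reading the curvature and isoperimetric bounds off Corollaries \ref{cor:ricci_comp_curv_bnd} and \ref{cor:isoconst_bdd}, deferring the detailed verifications (that $\varphi$ satisfies \eqref{eq:ricci_iso_diff_inequal} with equality, and the endpoint matching) to \cite{MR2729306}. The one minor caveat is that the equality of $\varphi$ in the differential equation is established there by direct computation on the explicit Rosenau solution rather than by your heuristic that the viscosity inequalities of Theorem \ref{thm:ricci_iso_viscosity} saturate along the realised profile, but this does not change the structure of the argument.
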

\subsection{genus 1}
\label{sec-4-2}

Next, let us describe a comparison solution for the universal cover of surfaces of genus $\genus = 1$, i.e. for $\RR^2$. 

Recall, we need to find a function satisfying the differential inequality
\[
\phi_t\geq \phi^2\phi''-\phi(\phi')^2+4\pi \phi'.
\]
We look for solutions with equality.  First, to simply matters, let $v=\phi^2$ which satisfies the  equation
\[
v_t = vv''-(v')^2+4\pi v' = v^2\left(\frac{v'}{v} - \frac{4\pi}{v}\right)'.
\]
Taking the Ansatz $v(a,t) = tV(a/t)$, we obtain an integrable equation, which adding in the limiting behaviour $V(0) = 0$, has the family of solutions
\[
V_C(z) = \frac{1}{C}\left(4\pi - \frac{1}{C}\right) \left(1 - e^{-Cz}\right) + \frac{z}{C}.
\]
That is, we have
\begin{equation}
\label{eq:ricci_plane_comparison}
v_C(a,t) = \frac{a}{C} + \frac{t}{C}\left(4\pi - \frac{1}{C}\right) \left(1 - e^{-\tfrac{Ca}{t}}\right).
\end{equation}

We can now use $v_C$ as a comparison for $\genus = 1$ surfaces, as in the following:

\begin{theorem}
\label{thm:plane_comparison}
Let $\metric(t)$ be any solution of the normalised Ricci flow on $M$ a closed, genus $1$ surface and let $\unicover{\metric} = \uniproj^{\ast}\metric$ be the pull back metric to the universal cover $\unicover{M} = \RR^2$.  Then there exists a $C>0$ such that the function $\phi = \sqrt{v_c}$ where $v_c$ is defined by \eqref{eq:ricci_plane_comparison} satisfies $\phi(a, t) < \isoprofile_{\unicover{\metric}} (a, t)$ for all $a \in (0,\infty)$ and $t \in [0, T)$. Therefore the Gauss curvature $\gausscurv$ of $M$ satisfies the bound
\[
\sup_M \gausscurv \leq \frac{A}{t}
\]
for a constant $A>0$ depending only on the initial metric $\metric_0$.
\end{theorem}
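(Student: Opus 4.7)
The approach is to verify that $\phi = \sqrt{v_C}$ satisfies the hypotheses of Proposition~\ref{prop:comparison} for a suitable $C > 0$, and then read off the curvature bound from Corollary~\ref{cor:ricci_comp_curv_bnd}. By construction the Ansatz $v(a,t) = tV(a/t)$ produces an exact solution of $v_t = vv'' - (v')^2 + 4\pi v'$, which for genus $1$ (so that $1-\genus = 0$) is equivalent, via the substitution $v = \phi^2$ discussed in Section~3.2, to equality in \eqref{eq:ricci_iso_diff_inequal}. Direct differentiation of \eqref{eq:ricci_plane_comparison} gives $v_C'' = -(C/t)(4\pi - 1/C) e^{-Ca/t}$, which is strictly negative as soon as $C > 1/(4\pi)$; combined with $v_C > 0$, the identity $\phi'' = (2vv'' - (v')^2)/(4 v^{3/2})$ then forces $\phi'' < 0$, so $\phi$ is strictly concave (and visibly positive). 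Taylor expanding $1 - e^{-Ca/t}$ yields $v_C(a,t) = 4\pi a + O(a^2)$ as $a \to 0$, which delivers $\phi(a,t)/\sqrt{4\pi a} \to 1$, securing the small-scale asymptotic.

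I expect the main obstacle to be choosing $C$ so that $\phi(a, 0) = \sqrt{a/C}$ lies strictly below $\isoprofile_{\unicover{\metric}(0)}(a)$ for every $a \in (0, \infty)$. Combining the small-scale behavior $\isoprofile(a) = \sqrt{4\pi a}(1 + O(a))$ from Theorem~\ref{thm:bdrybehaviour} with the large-scale behavior $\isoprofile(a) \sim C_0 \sqrt{4\pi a}$ from Theorem~\ref{thm:asymptotics_large} (genus $1$, so $1-\genus = 0$) shows that $\isoprofile(a)/\sqrt{a}$ has positive finite limits at both ends; by continuity on $(0,\infty)$ the quotient therefore attains a global positive minimum $m > 0$. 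Choosing any $C > 1/m^2$ then yields $\sqrt{a/C} < m\sqrt{a} \leq \isoprofile_{\unicover{\metric}(0)}(a)$ everywhere, which also forces $C > 1/(4\pi)$ since $m \leq \sqrt{4\pi}$ by the small-$a$ asymptotic.

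Finally, Proposition~\ref{prop:comparison}, which removes the need for the large-scale asymptotic hypothesis of Theorem~\ref{thm:comparison} in the non-compact case, gives $\phi(a,t) \leq \isoprofile_{\unicover{\metric}(t)}(a)$ for all $a,t$; strict inequality can be recovered by the $\phi_\epsilon = (1-\epsilon)\phi$ perturbation used in the proof of Theorem~\ref{thm:comparison}, since the extra $\epsilon(1-\epsilon)(2-\epsilon)\phi^2\phi''$ term is negative by strict concavity, so $\phi_\epsilon$ strictly satisfies \eqref{eq:ricci_iso_diff_inequal}. Expanding $v_C$ to second order in $a$ gives
\[
\phi(a,t) = \sqrt{4\pi a}\left(1 - \frac{4\pi C - 1}{16\pi t}\, a + O(a^2)\right),
\]
so Corollary~\ref{cor:ricci_comp_curv_bnd} identifies $\gausscurv_0(t) = (4\pi C - 1)/(2t)$, yielding the stated bound $\sup_M \gausscurv \leq A/t$ with $A = (4\pi C - 1)/2 > 0$.
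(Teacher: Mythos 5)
Your proposal is correct and follows essentially the same route as the paper: verify strict positivity and strict concavity of $\sqrt{v_C}$ for $C>1/(4\pi)$, use the small- and large-scale asymptotics of $\isoprofile_{\unicover{\metric}(0)}$ to choose $C$ large enough that $\sqrt{a/C}$ lies below the initial profile, invoke Proposition~\ref{prop:comparison} in place of the large-scale hypothesis of Theorem~\ref{thm:comparison}, and read off the bound from the small-$a$ expansion via Corollary~\ref{cor:ricci_comp_curv_bnd}, with your constant $A=(4\pi C-1)/2$ agreeing with the paper's $2\pi C - 1/2$. The one inaccuracy is your closing remark on strictness: since $v_C$ satisfies the differential equation with equality, the $\phi_\epsilon=(1-\epsilon)\phi$ device gives $\phi_\epsilon<\isoprofile$ for each $\epsilon>0$ but only $\phi\leq\isoprofile$ after letting $\epsilon\to 0$ — though this is no worse than what the paper's own proof establishes, and the non-strict inequality is all that the curvature bound requires.
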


\begin{proof}
We know that $v_C$ satisfies the differential inequality and it's easy to see that $v_C$ is strictly concave, so we need to show that $v_C$ meets the other requirements for the comparison theorem in the form of Proposition \ref{prop:comparison}.  At $t=0$, we have $v_C(a,0) = \tfrac{a}{C}$ so by choosing $C$ large enough, we have the initial comparison since $\isoprofile \simeq \sqrt{C_1 a + C_2 a^2}$ as $a\to \infty$. 

On the small scale we have 
\[
\phi(a, t) = \sqrt{4\pi a} \left(1 - (\frac{C}{4} - \frac{1}{16\pi})\frac{1}{t} a + O(a^2)\right)
\]
as required for the small scale asymptotics and also providing the stated curvature bound with $A = 2\pi C - 1/2$ (which is positive for $C > 1/4\pi$) by Corollary \ref{cor:ricci_comp_curv_bnd}.
\end{proof}
\subsection{genus $>1$}
\label{sec-4-3}

In this section, we construct the model comparison solution for the final case, $\genus > 1$. When $\sup_{M_0} \gausscurv > 0$, the construction is a little involved.
\subsubsection{$K < 0$ case}
\label{sec-4-3-1}

First let us consider the case where \(\sup_{M_0} \gausscurv \leq 0\), since it admits a simple, appealing comparison solution.

For any $A,C>0$, let
\begin{equation}
\label{eq:ricci_hyperbolic_comparison}
v(a, t) = 4\pi a + B(t) a^2
\end{equation}
with
\[
B(t) = (\genus - 1) - \frac{C}{1 + A e^{(\genus - 1)t}}.
\]
Direct computation shows that $v_C$ is a solution of the differential equation
\[
v_t = vv''-(v')^2 + (4\pi - (1-\genus)a) v' + 2(1-\genus)v,
\]
which is the required equation for $v = \phi^2$ as in the genus $1$ case above.

\begin{theorem}
\label{thm:ricci_hyperbolic_curvature_bound_negative}
Let $\metric(t)$ be any solution of the normalised Ricci flow on $M$ a closed, genus $>1$ surface with $\sup_{M_0} \gausscurv \leq 0$ and let $\unicover{\metric} = \uniproj^{\ast}\metric$ the pull back to $\HH^2$ with $\uniproj: \HH^2 \to M$ the universal cover.  Then for $\phi = \sqrt{v}$ where $v$ is defined by \eqref{eq:ricci_hyperbolic_comparison}, there exists $A,C>0$ such that $\phi(a, t) < \isoprofile_{\unicover{\metric}} (a, t)$ for all $a \in (0,\infty)$.  Therefore, the Gauss curvature $\gausscurv_M$ satisfies the bound
\[
\sup_{M_t} \gausscurv \leq C_1 e^{-C_2t}
\]
for positive constants $C_1,C_2$.
\end{theorem}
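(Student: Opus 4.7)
The plan is to verify the hypotheses of Proposition~\ref{prop:comparison} for $\phi = \sqrt v$, and then read off the curvature bound via Corollary~\ref{cor:ricci_comp_curv_bnd}. That $v$ satisfies the target differential equation (and hence $\phi$ satisfies \eqref{eq:ricci_iso_diff_inequal} with equality) is a direct substitution into the squared form \eqref{eq:ricci_isosq_diff_inequal}: writing $\mu := \genus - 1$ and $v = 4\pi a + B(t) a^2$, the constant and linear-in-$a$ terms cancel identically, leaving $v_t = B'(t) a^2$ to be matched against $2B(\mu - B) a^2$, which is the logistic-type ODE solved by the prescribed $B(t)$. Strict concavity of $\phi$ is then automatic, because $2vv'' - (v')^2 = -16\pi^2$ gives $\phi'' = -4\pi^2 v^{-3/2} < 0$ wherever $v > 0$; positivity of $v$ on $(0, \infty) \times [0, \infty)$ is secured by choosing $A, C$ so that $B(0) \geq 0$, noting that $B$ is monotone increasing with $B(t) \to \mu$. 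The small-scale asymptotic $\phi(a,t)/\sqrt{4\pi a} \to 1$ follows at once from the expansion
\[
\phi(a, t) = \sqrt{4\pi a}\,\bigl(1 + B(t) a/(8\pi) + O(a^2)\bigr).
\]

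The main technical step is the strict initial inequality $\phi(a, 0) < \isoprofile_{\unicover{\metric}(0)}(a)$ on all of $(0, \infty)$. For small $a$, the Bol--Fiala inequality applies on the simply connected cover $\unicover{M_0}$ with $\gausscurv \leq \gausscurv_0 := \sup_{M_0}\gausscurv \leq 0$ to give $\isoprofile_{\unicover{\metric}(0)}(a)^2 \geq 4\pi a - \gausscurv_0 a^2$, so it suffices to arrange $B(0) < -\gausscurv_0$. For large $a$, Theorem~\ref{thm:asymptotics_large} yields $\isoprofile_{\unicover{\metric}(0)}(a)^2 \sim C_0^2(4\pi a + \mu a^2)$ for some $C_0 > 0$, which dominates $v(a, 0) = 4\pi a + B(0) a^2$ provided $B(0) < C_0^2 \mu$. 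Both constraints, together with $B(0) > 0$, are satisfied by first fixing $A$ large and then setting $C = (1+A)(\mu - B(0))$ for an appropriate small positive value of $B(0)$; the intermediate range of $a$ is then handled by the strict concavity of $\phi(\cdot, 0)$, which prevents the two profiles from crossing off the endpoints.

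Proposition~\ref{prop:comparison} now delivers $\phi(a, t) \leq \isoprofile_{\unicover{\metric}(t)}(a)$ for all $a, t$. Matching the small-$a$ expansion of $\phi$ with the form specified in Corollary~\ref{cor:ricci_comp_curv_bnd} identifies $\gausscurv_0(t) = -B(t)$, and hence
\[
\sup_{M_t}\gausscurv \;\leq\; -B(t) \;=\; (1 - \genus) + \frac{C}{1 + A e^{(\genus - 1) t}}.
\]
Since $1 - \genus < 0$ and the fractional term decays like $e^{-(\genus - 1) t}$, this is bounded above by $C_1 e^{-C_2 t}$ for positive constants $C_1, C_2$ depending only on $\metric_0$ through the choice of $A, C$.

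The principal obstacle is verifying the initial inequality on the intermediate range of $a$: Bol--Fiala pins down the small-$a$ regime and Theorem~\ref{thm:asymptotics_large} controls the tail, but no explicit formula for $\isoprofile_{\unicover{\metric}(0)}$ is available on the hyperbolic-type cover. The two-parameter family $B(t) = \mu - C/(1 + A e^{\mu t})$ is tailored precisely to supply enough flexibility: one degree of freedom sets the curvature bound scale $\gausscurv_0(0) = -B(0)$, and the other enforces the strict initial domination globally in $a$.
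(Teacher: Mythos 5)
Your overall strategy---verify the hypotheses of Proposition~\ref{prop:comparison} for $\phi=\sqrt v$ and read off the curvature bound from Corollary~\ref{cor:ricci_comp_curv_bnd}---is the paper's, and your concavity computation ($2vv''-(v')^2=-16\pi^2$), small-scale expansion, and the identification $\gausscurv_0(t)=-B(t)$ are all correct. The first genuine gap is the claim that the prescribed $B(t)=(\genus-1)-C/(1+Ae^{(\genus-1)t})$ solves the logistic ODE $B'=2B\bigl((\genus-1)-B\bigr)$ produced by the substitution. It does not: the exact solutions have the form $(\genus-1)/(1+De^{-2(\genus-1)t})$, with a factor $2$ in the exponent and the constant placed differently. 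Writing $k=\genus-1$ and $w=1+Ae^{kt}$, one computes $B'=Ck(w-1)/w^2$ while $2B(k-B)=2C(kw-C)/w^2$, so the required \emph{inequality} $B'\le 2B(k-B)$ holds for all $t\ge0$ if and only if $2C\le k(2+A)$. This constraint breaks your parameter selection: with $C=(1+A)(k-B(0))$ it forces $B(0)\ge kA/\bigl(2(1+A)\bigr)$, which tends to $k/2$ as $A\to\infty$, so ``$A$ large and $B(0)$ small'' are incompatible. When $\sup_{M_0}\gausscurv$ is close to $0$ you need $B(0)$ small and hence $A$ \emph{small} (e.g.\ $C=k(2+A)/2$ gives $B(0)=kA/(2(1+A))$ arbitrarily small while preserving the differential inequality); as written, your verification of \eqref{eq:ricci_isosq_diff_inequal} fails for such initial metrics.

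The second gap is the intermediate range of the initial inequality: strict concavity of $\phi(\cdot,0)$ does not prevent $\isoprofile_{\unicover{\metric}(0)}$ from dipping below $\phi(\cdot,0)$ between the two regimes you control, since you have no concavity information on $\isoprofile$ here (Proposition~\ref{prop:support_iso_nobdry} yields concavity of $\isoprofile^2+\gausscurv_0a^2$ only from a \emph{lower} curvature bound, which is not assumed). The repair is that Bol's inequality on the Cartan--Hadamard surface $\unicover{M}$ holds for \emph{all} admissible regions, not only small ones (fill in holes and sum over components, using $\sup_{M_0}\gausscurv\le0$), so $\isoprofile_{\unicover{\metric}(0)}(a)^2\ge 4\pi a-\bigl(\sup_{M_0}\gausscurv\bigr)a^2$ for every $a\in(0,\infty)$ and the single condition $0\le B(0)<-\sup_{M_0}\gausscurv$ settles the whole range at once, making the large-$a$ appeal to Theorem~\ref{thm:asymptotics_large} unnecessary. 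Note that both your argument and this repair require $\sup_{M_0}\gausscurv<0$ strictly; the borderline case $\sup_{M_0}\gausscurv=0$ forces $B(0)=0$ and needs a separate argument for strict initial inequality, which you (like the paper) leave unaddressed.
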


\begin{proof}
Since the comparison function is a quadratic with zero constant term and linear coefficient equal to $4\pi$, the small scale asymptotics are satisfied providing that $B(t) \leq - \text{const} \sup_M\gausscurv$, by the asymptotics of the isoperimetric profile given in theorem \ref{thm:bdrybehaviour}. Since we require $B(t)\geq 0$, this can only be achieved in the case  $\sup_{M_t} \gausscurv \leq 0$ which is true by the maximum principle under the assumption $\sup_{M_0} \gausscurv \leq 0$. In this case, we choose $A,C$ large enough so that the initial comparison holds. Concavity is easily checked. Proposition \ref{prop:comparison} completes the proof that $\phi(a, t) < \isoprofile_{\unicover{\metric}} (a, t)$ for all $a \in (0,\infty)$. The curvature bound now follows directly from Corollary \ref{cor:ricci_comp_curv_bnd}.
\end{proof}
\subsubsection{General case}
\label{sec-4-3-2}
\paragraph{Stationary solution}
\label{sec-4-3-2-1}

Recall we have the equation,
\[
\pd{t} v - \left\{vv'' - (v')^2 + [4\pi - 2(1-\genus)a]v' + 2(1-\genus)v\right\} \leq 0.
\]
We can write this as
\begin{equation}
\label{eq:hyperbolic_squared_diff_ineq}
\pd{t} v \leq v^2 \left(\frac{v'}{v} - \frac{4\pi - 2(1-\genus)a}{v}\right)'.
\end{equation}
Stationary solutions (with equality) to this equation that satisfy the conditions $v(0)=0$ and $\limsup_{x\to\infty} \tfrac{v(x)}{x^2} < \infty$ are given by
\begin{equation}
\label{eq:hyperbolic_stationary}
\begin{split}
v_C(x) &= \frac{1}{C}[4\pi + \frac{2(1 - \genus)}{C}][1 - e^{-Cx}] - \frac{2(1-\genus)}{C^2} (Cx) \\
&= 4\pi x + \frac{1}{C}[4\pi + \frac{2(1 - \genus)}{C}][1 - Cx - e^{-Cx}] \\
&= 4\pi x - \frac{1}{C}[4\pi + \frac{2(1 - \genus)}{C}]\frac{(Cx)^2}{2} \\
& \quad + \frac{1}{C}[4\pi + \frac{2(1 - \genus)}{C}][1 - Cx + \frac{(Cx)^2}{2} - e^{-Cx}]
\end{split}
\end{equation}
for any $C\geq 0$. The last line is obtained from the Taylor expansion for $e^{-Cx}$. Each of the three expressions illustrates different properties of $v_C$. For instance, the first line shows that $v_C$ grows at most linearly. The second and third lines give the first and second order Taylor expansions with explicit remainders.

For later use, the first and second derivatives of $v_C$ are
\begin{align}
\label{eq:hyperbolic_stationary_1st}
v_C' &= 4\pi + [4\pi + \frac{2(1 - \genus)}{C}][-1 + e^{-Cx}] \\
\label{eq:hyperbolic_stationary_2nd}
v_C'' &= -C[4\pi + \frac{2(1 - \genus)}{C}]e^{-Cx}.
\end{align}
In particular, provided that 
\[
C \geq C_{\text{crit}} = - \frac{1-\genus}{2\pi}
\]
we have $[4\pi + \frac{2(1 - \genus)}{C}] \geq 0$ and so $V_C$ is concave, strictly so when $C>C_{\text{crit}}$.

Such functions prove useful, but are not quite sufficient for our purposes. The comparison is constructed from the function,
\begin{equation}
\label{eq:f_hyperbolic}
f(x, t) = \sqrt{v_{C} (x) + b x^2}
\end{equation}
with $b\geq 0$.

\begin{lemma}
\label{lem:hyperbolic_stationary_concave}
Let $f$ be defined as in equation \eqref{eq:f_hyperbolic} with $C\geq C_{\text{crit}}$. Then $f$ is concave, if and only if 
\[
b \leq b_{\text{crit}} = \frac{(1-\genus)^2}{\frac{1}{C}[4\pi + \frac{2(1-\genus)}{C^2}]},
\]
with strict concavity corresponding to strict inequality.
\end{lemma}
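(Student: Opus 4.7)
The plan is to reduce the concavity of $f = \sqrt{w}$, where $w(x) := v_C(x) + bx^2$, to a sign condition on a single auxiliary function that can be analysed by its monotonicity. Since $w > 0$ on $(0,\infty)$, differentiating twice gives
\[
4 w^{3/2}\, f'' \;=\; 2 w w'' - (w')^2 \;=:\; G(x),
\]
so $f$ is concave (respectively strictly concave) on $(0,\infty)$ if and only if $G \leq 0$ (respectively $G < 0$) throughout.

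The key observation is that in $G'$ the cross term $2w'w''$ cancels, leaving
\[
G'(x) \;=\; 2\, w(x)\, w'''(x).
\]
From \eqref{eq:hyperbolic_stationary_2nd}, and since $2b$ is killed by a third derivative, $w'''(x) = v_C'''(x) = C^2\alpha\, e^{-Cx}$ where I write $\alpha := 4\pi + \tfrac{2(1-\genus)}{C}$. The hypothesis $C \geq C_{\mathrm{crit}}$ is precisely $\alpha \geq 0$, so combined with $w > 0$ this shows $G' \geq 0$ on $(0,\infty)$. Hence $G$ is non-decreasing, and the global sign condition $G \leq 0$ collapses to the single boundary condition $\lim_{x\to\infty} G(x) \leq 0$.

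To compute this limit I use the first form in \eqref{eq:hyperbolic_stationary}, which yields the clean splitting
\[
w(x) \;=\; b x^2 + \tfrac{2(\genus-1)}{C}\, x + \tfrac{\alpha}{C} + O(e^{-Cx}) \quad \text{as } x\to\infty,
\]
with analogous expansions for $w'$ and $w''$ whose exponentially decaying remainders vanish in the limit. Substituting into $G = 2ww'' - (w')^2$ and collecting powers of $x$, the leading quadratic and linear contributions cancel identically and only a constant remainder survives, producing
\[
\lim_{x\to\infty} G(x) \;=\; \frac{4b\alpha}{C} - \frac{4(\genus-1)^2}{C^2}.
\]
Rearranging $\lim_{x\to\infty} G \leq 0$ then gives the threshold $b \leq (\genus-1)^2/(C\alpha) = b_{\mathrm{crit}}$, with strict inequality yielding strict negativity of $G$ everywhere and hence strict concavity of $f$.

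The substantive work is confined to this last asymptotic calculation, and it is essentially routine once the decomposition of $v_C$ into linear polynomial plus exponentially decaying tail is in hand. The main conceptual step, rather than the main obstacle, is recognising the identity $G' = 2 w w'''$, which converts a seemingly global concavity question into a one-sided comparison of $G$ with a single limit at infinity.
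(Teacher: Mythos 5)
Your proof is correct, and it reaches the paper's conclusion by a genuinely different mechanism. Both arguments reduce concavity of $f=\sqrt{w}$, $w=v_C+bx^2$, to the sign of $G=2ww''-(w')^2$ (the paper writes this as $f''=\tfrac{1}{2f^3}[v_Cv_C''-\tfrac12(v_C')^2+b(2v_C-2xv_C'+x^2v_C'')]$, the same quantity up to a positive factor). The paper then isolates $b$ as a ratio and optimises numerator and denominator \emph{separately}: it bounds $\tfrac12(v_C')^2-v_Cv_C''$ below by its limit at infinity using $v_C''\leq 0$, and computes $2v_C-2xv_C'+x^2v_C''=\tfrac{2\alpha}{C}[1-e^{-Cx}(1+Cx+\tfrac{(Cx)^2}{2})]\leq\tfrac{2\alpha}{C}$, where $\alpha=4\pi+\tfrac{2(1-\genus)}{C}$. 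A priori that only gives sufficiency of $b\leq b_{\text{crit}}$; necessity needs the (implicit) observation that both extrema occur in the same limit $x\to\infty$. Your identity $G'=2ww'''$ with $w'''=v_C'''=C^2\alpha e^{-Cx}\geq0$ makes this transparent: $G$ is monotone, so global concavity is \emph{equivalent} to $\lim_{x\to\infty}G\leq0$, and your limit $\tfrac{4b\alpha}{C}-\tfrac{4(\genus-1)^2}{C^2}$ yields exactly the threshold the paper's own computation produces, namely $(1-\genus)^2\big/\big(C[4\pi+\tfrac{2(1-\genus)}{C}]\big)$. (The displayed $b_{\text{crit}}$ in the lemma has misplaced powers of $C$ --- a typo also present in the last line of the paper's proof --- so the apparent discrepancy with your formula is not an error on your side.) One caveat, shared by both arguments: when $C>C_{\text{crit}}$ and $b=b_{\text{crit}}$, $G$ is strictly increasing to $0$, hence $G<0$ on all of $(0,\infty)$ and $f$ is still strictly concave; the ``strict concavity iff strict inequality'' clause therefore holds only in the direction actually used later.
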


\begin{proof}
We have
\[
f'' = \frac{1}{2f^3}[v_Cv_C'' - \frac{1}{2}(v_C')^2 + b(2v_c - 2xv_C' + x^2 v_C'')]
\]
so that $f$ is concave if and only if
\[
b \leq \frac{\frac{1}{2}(v_C')^2 - v_Cv_C''}{(2v_c - 2xv_C' + x^2 v_C'')}
\]
since $b$ is non-negative.

First, consider the numerator. Since $v_C\geq 0$ and $v_C''\leq 0$ we have 
\[
\frac{1}{2}(v_C')^2 - v_C v_C'' \geq \frac{1}{2}(v_C')^2.
\]
Again using $v_C''\leq 0$ we have
\[
v_C'(x) \geq \lim_{x\to\infty} v_C' = \frac{-2(1-\genus)}{C}
\]
Also $\lim_{x\to\infty} v_Cv_C'' = 0$ so that in fact,
\[
\inf_{x}\left(\frac{1}{2}[(v_C')^2 - v_Cv_C'']\right) = \frac{2(1-\genus)^2}{C^2}.
\]

For the denominator, we have
\begin{align*}
2v_C - 2xv_C' + x^2v_C &= \frac{2}{C}[4\pi + \frac{2(1 - \genus)}{C}][1 - e^{-Cx}(1 + Cx + (Cx)^2/2)] \\
&\leq \frac{2}{C}[4\pi + \frac{2(1 - \genus)}{C}]
\end{align*}
with equality at $x=0$ so that
\[
\sup_{x} \left(2v_C - 2xv_C' + x^2v_C\right) = \frac{2}{C}[4\pi + \frac{2(1 - \genus)}{C}].
\]
Therefore $f$ is concave if and only if
\[
b \leq \frac{\frac{2(1-\genus)^2}{C^2}}{\frac{2}{C}[4\pi + \frac{2(1 - \genus)}{C}]} = \frac{(1-\genus)^2}{\frac{1}{C}[4\pi + \frac{2(1-\genus)}{C^2}]}.
\]
\end{proof}

\begin{remark}
\label{rem:hyperbolic_stationary_concave}
Observe that the denominator in $b_{\text{crit}}$ is zero for $C=C_{\text{crit}}$, is positive for $C>C_{\text{crit}}$ and, approaches $0$ as $C\to\infty$. Thus for $C_0 \geq C_{\text{crit}}$, $b_{\text{crit}} ([C_{\text{crit}}, C_0])$ is bounded below away from $0$. This will prove useful later.
\end{remark}

Let us also record the small and large scale asymptotics of $f$ in a lemma for later reference.

\begin{lemma}
\label{lem:hyperbolic_asymptotics}
The function $v_C$ satisfies the asymptotic behaviour
\[
v_C (x) = 4\pi x - \frac{1}{C}\left[4\pi + \frac{2(1-\genus)}{C}\right] \frac{(Cx)^2}{2} + \bigo((Cx)^3))
\]
as $Cx\to 0$. Moreover,
\[
\limsup_{x\to\infty} v_C(x) = -\frac{2(1-\genus)}{C^2} (Cx).
\]

Therefore, $f$ satisfies the asymptotic behaviour
\[
f^2(x) = 4\pi x +\left(\frac{2b}{C^2} - \frac{1}{C}\left[4\pi + \frac{2(1-\genus)}{C}\right]\right) \frac{(Cx)^2}{2} + \bigo((Cx)^3))
\]
as $Cx\to 0$. Moreover
\[
\limsup_{x\to \infty} f^2 = b x^2.
\]
\qed
\end{lemma}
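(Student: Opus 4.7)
The plan is purely computational: the lemma consists of four asymptotic statements about the explicit function $v_C$ defined in \eqref{eq:hyperbolic_stationary} and its modification $f^2 = v_C + bx^2$, and the three equivalent forms of $v_C$ in \eqref{eq:hyperbolic_stationary} have been arranged precisely to separate the small-$Cx$ and large-$Cx$ regimes. So the plan is simply to read off each asymptotic from the appropriate line, using the Taylor expansion of $e^{-Cx}$.

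For the small-scale expansion of $v_C$, I would work from the third line of \eqref{eq:hyperbolic_stationary}, which already displays $4\pi x$ plus the quadratic term $-\frac{1}{C}\bigl[4\pi + \frac{2(1-\genus)}{C}\bigr]\frac{(Cx)^2}{2}$, followed by a remainder whose bracketed factor is precisely the second-order Taylor remainder of $e^{-y}$ at $y = Cx$. The standard estimate $1 - y + y^2/2 - e^{-y} = O(y^3)$ as $y\to 0$ then delivers the claimed $O((Cx)^3)$ term. For the large-scale behaviour of $v_C$, I would instead use the first line of \eqref{eq:hyperbolic_stationary}: as $x \to \infty$ the factor $1 - e^{-Cx}$ tends to $1$, so the term $\frac{1}{C}\bigl[4\pi + \frac{2(1-\genus)}{C}\bigr][1 - e^{-Cx}]$ is bounded, while $-\frac{2(1-\genus)}{C^2}(Cx)$ is linear in $x$ with positive slope (since $\genus>1$), yielding the stated linear asymptotic.

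The two statements for $f^2$ follow by adding $bx^2$ to each expansion. At small scales, writing $bx^2 = \frac{2b}{C^2}\cdot\frac{(Cx)^2}{2}$ and combining with the quadratic term of $v_C$ produces the coefficient $\frac{2b}{C^2} - \frac{1}{C}\bigl[4\pi + \frac{2(1-\genus)}{C}\bigr]$ of $(Cx)^2/2$ in the expansion of $f^2$, matching the formula; the $O((Cx)^3)$ remainder is inherited unchanged. At large scales, the quadratic $bx^2$ dominates the at-most-linear growth of $v_C$, so $f^2/(bx^2) \to 1$, which is the intended reading of $\limsup_{x\to\infty} f^2 = bx^2$. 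There is no genuine obstacle anywhere in the argument; the content of the lemma is really just the bookkeeping needed to package the expansions in a form convenient for the comparison arguments to follow, and the three rewritings of $v_C$ in \eqref{eq:hyperbolic_stationary} have already done most of that work.
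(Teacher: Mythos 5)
Your computation is correct and is exactly the argument the paper intends — the lemma is stated with a \qed and no written proof precisely because the three rewritings of $v_C$ in \eqref{eq:hyperbolic_stationary} already display the small-scale Taylor expansion (with the $O((Cx)^3)$ remainder coming from $1 - y + y^2/2 - e^{-y} = O(y^3)$) and the at-most-linear large-scale growth, after which adding $bx^2 = \tfrac{2b}{C^2}\cdot\tfrac{(Cx)^2}{2}$ gives the statements for $f^2$. Your reading of the (slightly abusive) $\limsup$ notation as an asymptotic equivalence is also the intended one.
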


\begin{remark}
\label{rem:hyperbolic_asymptotics}
In particular notice that the coefficient of $x^2/2$ (rather than $(Cx)^2/2$) from the small-scale asymptotics of $f$ is
\[
2b - 4\pi C - 2(1-\genus)
\]
and this can be made arbitrarily large negative by choosing $0 \leq b \ll 1-\genus$ and $C \gg C_{\text{crit}}$.
\end{remark}
\paragraph{Construction of the comparison function}
\label{sec-4-3-2-2}

The comparison is built from the function $f$ defined in equation \eqref{eq:f_hyperbolic} by letting $C=C(t), b=b(t)$. If $C(t) \searrow C_{\text{crit}}$ and $b(t) \nearrow -(1-\genus)$ as $t\to \infty$, with $b(t) \leq b_{\text{crit}}(C(t))$ (which choice is possible by remark \ref{rem:hyperbolic_stationary_concave}), then
\begin{equation}
\label{eq:hyperbolic_comparison}
f(x,t) = \sqrt{v_{C(t)} + b(t)x^2}
\end{equation}
is a concave function with
\[
\lim_{t\to\infty} f(x,t) = \sqrt{4\pi x + (1-\genus)x^2}
\]
the isoperimetric profile of the metric of constant curvature $1-\genus$ (which is the curvature of the metric lifted from the constant curvature surface with area $4\pi$ and genus $\genus$ by the Gauss-Bonnet theorem).

By choosing $C(0)>C_{\text{crit}}$ sufficiently large and $0\leq b(0) < \max\{b_{\text{crit}}, 1-\genus\}$, sufficiently small, lemma \ref{lem:hyperbolic_asymptotics} and remark \ref{rem:hyperbolic_asymptotics} imply that for any initial metric, initial inequality is satisfied along with the asymptotic behaviour required by comparison theorem in the form of Proposition \ref{prop:comparison}.

Thus for $f$ to be a suitable comparison function, we need to choose $C(t)$ and $b(t)$ so that the differential inequality is satisfied. As before, it is more convenient to work with $v=f^2 = v_c + bx^2$

\begin{lemma}
\label{lem:hyperbolic_diff_ineq}
Let 
\begin{align*}
b(t) &= \left[\left(\frac{1}{b_0} + \frac{1}{1-\genus}\right) e^{4(1-\genus)t} - \frac{1}{1-\genus}\right]^{-1} \\
C(t) &= (C_0 - C_{\text{crit}})\sqrt{b_0} e^{2(1-\genus)t} \left[\left(\frac{1}{b_0} + \frac{1}{1-\genus}\right) e^{4(1-\genus)t} - \frac{1}{1-\genus}\right]^{-1/2} + C_{\text{crit}}
\end{align*}
Then $v = f^2$ satisfies the differential inequality \eqref{eq:hyperbolic_squared_diff_ineq} with $f$ defined by \eqref{eq:hyperbolic_comparison}.
\end{lemma}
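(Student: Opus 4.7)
The plan is to substitute $v = f^2 = v_{C(t)}(x) + b(t) x^2$ directly into the operator
\[
\mathcal{L}[v] := vv'' - (v')^2 + (4\pi - 2(1-\genus)x)v' + 2(1-\genus)v - v_t
\]
and show $\mathcal{L}[v] \geq 0$ pointwise. The central algebraic tool is the defining first-order ODE
\[
v_C' + Cv_C = 4\pi - 2(1-\genus)x,
\]
which makes each $v_C$ (at fixed $C$) a stationary solution of the spatial part of $\mathcal{L}$; differentiating once gives $v_C'' = -2(1-\genus) - Cv_C'$. Together these identities eliminate all spatial derivatives of $v_C$ in favour of $v_C$ itself and polynomials in $x$.

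Using $v_t = C_t\,\partial_C v_C + b_t x^2$ and expanding, the diagonal contribution from $v_C$ alone vanishes by stationarity, and what remains has the schematic form $P(x) + Q_1(x) v_C + Q_2(x) \partial_C v_C$ where $P, Q_1, Q_2$ are polynomial in $x$ with coefficients depending on $b, C, b_t, C_t$ and $1-\genus$. Requiring the leading coefficients to cancel forces a decoupled ODE system: a scalar logistic-type equation for $b$ with stable equilibrium $b = \genus - 1$, together with a linear first-order equation of the form $C_t = -2b(C - C_{\text{crit}})$ where $C_{\text{crit}} = -(1-\genus)/(2\pi)$. The $b$-equation linearises under the substitution $u = 1/b + 1/(1-\genus)$, giving $u_t$ proportional to $u$; integrating recovers the stated formula for $b(t)$, and the linear $C$-equation then integrates against this $b(t)$ to yield the displayed expression for $C(t) - C_{\text{crit}}$. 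Conversely, direct differentiation of the formulas as stated confirms they satisfy this ODE system, so the ODEs can be taken as a reformulation of the hypothesis.

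Verification then reduces to showing that, once the ODEs are imposed, the residual in $\mathcal{L}[v]$ is pointwise non-negative. The main obstacle is controlling the interplay of polynomial and $e^{-Cx}$ contributions in the intermediate-$x$ regime. A clean approach is to split $\mathcal{L}[v]$ into its polynomial and $e^{-Cx}$ parts and treat each separately, exploiting the sign constraints $b > 0$, $b + (1-\genus) < 0$, $C > C_{\text{crit}} > 0$ that follow from the explicit formulas, together with positivity and concavity of $v_C$. The endpoint regimes are handled by the structural features already used in Lemma \ref{lem:hyperbolic_asymptotics}: small-$x$ cancellations come from Taylor expansion around $x = 0$ (where $v_C(0) = 0$ and $\partial_C v_C(0) = 0$ force $\mathcal{L}[v](0) = 0$ automatically), and large-$x$ behaviour is governed by the quadratic $bx^2$ term, with the ODE for $b$ chosen precisely to balance the leading $x^2$ coefficient of the spatial operator.
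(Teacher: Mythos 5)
Your skeleton matches the paper's: substitute $v=v_{C(t)}+b(t)x^2$, use stationarity of $v_C$ to kill the diagonal contribution, reduce to an ODE system for $(b,C)$, and integrate. The first integral $v_C'+Cv_C=4\pi-2(1-\genus)x$ is correct and is a tidy way to package what the paper extracts from the explicit formulas \eqref{eq:hyperbolic_stationary}--\eqref{eq:hyperbolic_stationary_2nd}, and your linearisation $u=1/b+1/(1-\genus)$ of the Bernoulli equation correctly reproduces the stated $b(t)$. Two small corrections on the ODE side: the coefficient comparison does not \emph{force} $C_t=-2b(C-C_{\text{crit}})$; what is needed is only the inequality $\frac{d}{dt}\ln C>-2b$, and the paper's choice works precisely because the slack $-2b(C-C_{\text{crit}})>-2bC$ (using $b,C_{\text{crit}}>0$) converts equality in the shifted ODE into strict inequality in the required one. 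Similarly the $b$-equation is needed only as an inequality $\frac{db}{dt}\le -4(b^2+(1-\genus)b)$, with the strictness of the overall estimate supplied elsewhere (the paper's $4\to 2$ replacement in \eqref{eq:dt_hyperbolic_stationary}, using that $C$ is decreasing).

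The genuine gap is the pointwise non-negativity of the residual for all $x$, which is the entire analytic content of the lemma and which your proposal defers to a ``split into polynomial and $e^{-Cx}$ parts.'' That split does not produce terms of definite sign: for example the contribution $\beta\left(1-Cx+\tfrac{(Cx)^2}{2}-e^{-Cx}\right)$ with $\beta>0$ has a strictly negative exponential part, and the contribution proportional to $Cx(1-Cx)$ changes sign, so neither half of your decomposition can be handled ``separately'' by the listed sign constraints. Checking $\mathcal{L}[v](0)=0$ and the leading $x^2$ balance at infinity does not control the intermediate regime, which you yourself flag as the obstacle but do not resolve. The paper's resolution is different and essential: both $\partial_t v$ and the spatial operator are expanded in the \emph{same} basis of explicitly signed functions, namely $\tfrac{(Cx)^2}{2}$, $1-Cx+\tfrac{(Cx)^2}{2}-e^{-Cx}$, $Cx(1-Cx-e^{-Cx})$ and $\tfrac{(Cx)^2}{2}(1-e^{-Cx})$, and the inequality is obtained by comparing coefficients blockwise (equations \eqref{eq:dt_hyperbolic_stationary} and \eqref{eq:spatial_hyperbolic_stationary}); it is exactly this comparison that generates the two differential inequalities \eqref{eq:b_bernoulli_hyperbolic_stationary} and \eqref{eq:c_hyperbolic_stationary}, together with the constraint $C\ge C_{\text{crit}}$ guaranteeing $4\pi+\tfrac{2(1-\genus)}{C}\ge 0$. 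Without either this blockwise comparison or some substitute argument for the intermediate-$x$ regime, your proof is incomplete at its decisive step.
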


\begin{proof}
First, for the time derivative we have
\begin{equation}
\label{eq:dt_hyperbolic_stationary}
\begin{split}
\pd[v]{t} &= \dd[C]{t} \pd[v_C]{C} + \dd[b]{t} x^2 \\
&= -\dd[C]{t} \frac{1}{C^2}[4\pi + \frac{2(1-\genus)}{C}] \frac{(Cx)^2}{2} \\
&\quad - \dd[C]{t} \left(\frac{1}{C^2}[4\pi + \frac{4(1-\genus)}{C}] \right) \left(1 - Cx + \frac{(Cx)^2}{2} - e^{-Cx}\right) \\
&\quad - \dd[C]{t} \left(\frac{1}{C^2}[4\pi + \frac{2(1-\genus)}{C}]\right)\left(Cx\right)\left(1 - Cx - e^{-Cx}\right) \\
&\quad + \frac{2}{C^2} \dd[b]{t} \frac{(Cx)^2}{2} \\
&< \frac{1}{C^2} \left(2\dd[b]{t} - \dd[C]{t} [4\pi + \frac{2(1-\genus)}{C}]\right) \frac{(Cx)^2}{2} \\
&\quad - \dd[C]{t} \left(\frac{1}{C^2}[4\pi + \frac{2(1-\genus)}{C}] \right) \left(1 - Cx + \frac{(Cx)^2}{2} - e^{-Cx}\right) \\
&\quad - \dd[C]{t} \left(\frac{1}{C^2}[4\pi + \frac{2(1-\genus)}{C}\right)\left(Cx\right)\left(1 - Cx - e^{-Cx}\right) \\
\end{split}
\end{equation}
The inequality occurs in the second line after the inequality by replacing the $4$ with a $2$ using the fact that $C$ is decreasing and $(1-\genus)<0$.

For the spatial part, we first use the fact that for two functions $g,h$ we have
\[
(g+h)^2 \laplace \ln (g+h) = (g+h)(g+h)'' - (g+h)' = g^2\laplace \ln g + h^2 \laplace \ln h + gh'' + hg'' - 2g'h'
\]
Thus with $g=v_C$ and $h=bx^2$ we get
\[
v^2 \laplace \ln v = v_C^2 \laplace \ln v_C - 2b^2x^2 + b (2v_C - 4 x v_C' + x^2 v_C'') 
\]
so that
\[
\begin{split}
v^2 \laplace \ln v + L[v] &= v_C^2 \laplace\ln v_C + L[v_C] + 8\pi b x - 2(b^2 + (1-\genus)b)x^2 \\
&\quad + b (2v_C - 4 x v_C' + x^2 v_C'') \\
&= 8\pi b x - 2(b^2 + (1-\genus)b)x^2 + b (2v_C - 4 x v_C' + x^2 v_C'')
\end{split}
\]
since $v_C$ satisfies $v_C^2 \laplace\ln v_C + L[v_C] = 0$. Expand the last term in parenthesis in a Taylor series using equations \eqref{eq:hyperbolic_stationary}, \eqref{eq:hyperbolic_stationary_1st} and \eqref{eq:hyperbolic_stationary_2nd} to get
\begin{equation}
\label{eq:spatial_hyperbolic_stationary}
\begin{split}
v^2 \laplace \ln v + L[v] &= \left(\frac{4b}{C}[4\pi + \frac{2(1-\genus)}{C}] - \frac{4(b^2 + (1-\genus)b)}{C^2} \right) \frac{(Cx)^2}{2} \\
&\quad + \frac{2b}{C}[4\pi + \frac{2(1-\genus)}{C}] \left(1 - Cx + \frac{(Cx)^2}{2} - e^{-Cx}\right) \\
&\quad + \frac{4b}{C}[4\pi + \frac{2(1-\genus)}{C}] \left(Cx\right) \left(1 - Cx - e^{-Cx}\right) \\
&\quad + \frac{2b}{C}[4\pi + \frac{2(1-\genus)}{C}] \frac{(Cx)^2}{2} \left(1 - e^{-Cx}\right).
\end{split}
\end{equation}

Now we compare the terms from equation \eqref{eq:dt_hyperbolic_stationary} with those of equation \eqref{eq:spatial_hyperbolic_stationary} to obtain the following necessary inequalities:

\begin{itemize}
\item $(Cx)^2/2$:
  \begin{multline*}
  \frac{1}{C^2} \left(2\dd[b]{t} - \dd[C]{t} [4\pi + \frac{2(1-\genus)}{C}]\right) \\
  < \left(\frac{4b}{C}[4\pi + \frac{2(1-\genus)}{C}] - \frac{4(b^2 + (1-\genus)b)}{C^2} \right)
  \end{multline*}
\item $1 - Cx + \frac{(Cx)^2}{2} - e^{-Cx}$:
  \[  
  -\dd[C]{t} \left(\frac{1}{C^2}[4\pi + \frac{2(1-\genus)}{C}] \right) 
  < \frac{2b}{C}[4\pi + \frac{2(1-\genus)}{C}]
  \]
\item $Cx \left(1 - Cx - e^{-Cx}\right)$:
  \[
  -\dd[C]{t} \left(\frac{1}{C^2}[4\pi + \frac{2(1-\genus)}{C}\right)
  < \frac{4b}{C}[4\pi + \frac{2(1-\genus)}{C}]
  \]
\item $\frac{(Cx)^2}{2} \left(1 - e^{-Cx}\right)$:
  \[
  0 < \frac{2b}{C}[4\pi + \frac{2(1-\genus)}{C}] 
  \]
\end{itemize}

All the above inequalities are satisfied if
\begin{align}
\label{eq:b_bernoulli_hyperbolic_stationary}
\dd[b]{t} &< -4\left(b^2 + (1-\genus)b\right) \\
\label{eq:c_hyperbolic_stationary}
\dd{t} \ln C &> - 2b
\end{align}
and $C \geq C_{\text{crit}}$ which ensures that $4\pi + \tfrac{2(1-\genus)}{C} \geq 0$. 

It is now a simple matter, left to the reader, to check that $b(t)$ as given in the statement of the lemma satisfies equality in the Bernoulli equation \eqref{eq:b_bernoulli_hyperbolic_stationary} and that equality in \eqref{eq:c_hyperbolic_stationary} is satisfied by
\[
\tilde{C} = (C_0 - C_{\text{crit}})\sqrt{b_0} e^{2(1-\genus)t} \left[\left(\frac{1}{b_0} + \frac{1}{1-\genus}\right) e^{4(1-\genus)t} - \frac{1}{1-\genus}\right]^{-1/2}.
\]
Since $C(t) = \tilde{C} + C_{\text{crit}} > \tilde{C}$ and $\dd{t} \tilde{C} < 0$, we then have
\[
\dd{t} \ln C = \frac{\dd{t}\tilde{C}}{\tilde{C} + C_{\text{crit}}} > \frac{\dd{t}\tilde{C}}{\tilde{C}} = -2b
\]
competing the proof.
\end{proof}

\begin{remark}
Observe that with $b(t), C(t)$ as given in lemma \ref{lem:hyperbolic_diff_ineq}, $b(t)$ monotonically increases from $b_0$ to $-(1-\genus)$ and $C(t)$ monotonically decreases from $C_0$ to $C_{\text{crit}}$ so that $f = \sqrt{v}$ converges to the constant curvature $1-\genus$ isoperimetric profile.
\end{remark}

Finally, applying corollary \ref{cor:ricci_comp_curv_bnd} we obtain
\begin{theorem}
\label{thm:ricci_hyperbolic_curvature_bound_general}
Let $\metric(t)$ be any solution of the normalised Ricci flow on $M$ a closed, genus $>1$ surface and let $\unicover{\metric} = \uniproj^{\ast}\metric$ the pull back to $\HH^2$ with $\uniproj: \HH^2 \to M$ the universal cover.  Then for $\phi = \sqrt{v_{C(t)} + b(t)a^2}$ where $C(t), b(t)$ are defined as in Lemma \ref{lem:hyperbolic_diff_ineq} there exists $C_0, b_0>0$ such that $\phi(a, t) < \isoprofile_{\unicover{\metric}} (a, t)$ for all $a \in (0,\infty)$.  Therefore, the Gauss curvature $\gausscurv_M$ satisfies the bound
\[
\sup_{M} \gausscurv_{M} (t) \leq \left(\frac{C(t)}{2}\left[4\pi + \frac{2(1-\genus)}{C(t)}\right] - b(t)\right)
\]
which decays exponentially fast to $(1-\genus)$ as $t \to \infty$.
\end{theorem}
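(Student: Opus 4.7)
The strategy is to invoke Proposition \ref{prop:comparison} with the explicit comparison $\phi = \sqrt{v_{C(t)} + b(t)a^2}$ built in Lemma \ref{lem:hyperbolic_diff_ineq}, and then to translate the resulting pointwise bound $\phi \leq \isoprofile_{\unicover{\metric}(t)}$ into the stated curvature estimate by matching small-scale asymptotics through Corollary \ref{cor:ricci_comp_curv_bnd}. Of the hypotheses of Proposition \ref{prop:comparison}, strict positivity is immediate from $v_C > 0$ and $b \geq 0$; the differential inequality \eqref{eq:ricci_iso_diff_inequal} is exactly the content of Lemma \ref{lem:hyperbolic_diff_ineq}; and the small-scale limsup condition $\phi(a,t)/\sqrt{4\pi a} \to 1$ as $a \to 0$ is part of Lemma \ref{lem:hyperbolic_asymptotics}. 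Strict concavity is provided by Lemma \ref{lem:hyperbolic_stationary_concave} provided that $b(t) < b_{\text{crit}}(C(t))$ for all $t \geq 0$; since $C(t)$ decreases monotonically from $C_0$ to $C_{\text{crit}}$ and $b(t)$ increases monotonically from $b_0$ to $\genus-1$, while $b_{\text{crit}}(C) \to \infty$ as $C \searrow C_{\text{crit}}$ and is bounded below on $[C_{\text{crit}}, C_0]$ by Remark \ref{rem:hyperbolic_stationary_concave}, this constraint can be arranged by taking $C_0$ sufficiently large and $b_0$ sufficiently small.

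The only non-trivial hypothesis is then the initial strict inequality $\phi(a,0) < \isoprofile_{\unicover{\metric}(0)}(a)$ for all $a \in (0,\infty)$, and this is the main obstacle. I would split the argument across three regions. On a small-$a$ region $(0,\varepsilon]$, Theorem \ref{thm:bdrybehaviour} and Lemma \ref{lem:hyperbolic_asymptotics} yield
\[
\isoprofile_{\unicover{\metric}(0)}(a) - \phi(a,0) = \tfrac{2\pi C_0 + (1-\genus) - b_0 - \sup_{M_0}\gausscurv}{8\pi}\sqrt{4\pi a}\cdot a + o(a^{3/2}),
\]
which is strictly positive for small enough $a$ once $C_0$ is chosen large relative to $\sup_{M_0}\gausscurv$. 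On the large-$a$ region $[R,\infty)$, Theorem \ref{thm:asymptotics_large} gives $\isoprofile_{\unicover{\metric}(0)}(a) \sim C\sqrt{-(1-\genus)}\,a$ while Lemma \ref{lem:hyperbolic_asymptotics} gives $\phi(a,0) \sim \sqrt{b_0}\,a$, so shrinking $b_0$ handles the large end. Finally, on the compact middle interval $[\varepsilon, R]$, the first line of \eqref{eq:hyperbolic_stationary} shows that $v_{C_0}(a) \to 0$ uniformly in $a \in [\varepsilon, R]$ as $C_0 \to \infty$, so $\phi(a,0) \to \sqrt{b_0}\,a$ uniformly there; shrinking $b_0$ further and enlarging $C_0$ simultaneously closes the middle region by compactness. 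The compatibility of these three choices is the only real book-keeping.

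Proposition \ref{prop:comparison} then delivers $\phi(a,t) \leq \isoprofile_{\unicover{\metric}(t)}(a)$ for every $a$ and $t$. Matching the $a^{3/2}$ coefficient from Lemma \ref{lem:hyperbolic_asymptotics} against Theorem \ref{thm:bdrybehaviour} through Corollary \ref{cor:ricci_comp_curv_bnd} gives exactly
\[
\sup_{M} \gausscurv_M(t) \leq 2\pi C(t) + (1-\genus) - b(t) = \tfrac{C(t)}{2}\bigl[4\pi + \tfrac{2(1-\genus)}{C(t)}\bigr] - b(t).
\]
The closed-form expressions of Lemma \ref{lem:hyperbolic_diff_ineq} make $C(t) \to C_{\text{crit}} = (\genus-1)/(2\pi)$ and $b(t) \to \genus-1$ exponentially at rates governed by multiples of $1-\genus$, and since $2\pi C_{\text{crit}} + (1-\genus) - (\genus-1) = 1-\genus$, the right-hand side converges to $1-\genus$ exponentially fast, completing the proof.
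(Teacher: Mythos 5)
Your proposal is correct and follows essentially the same route as the paper: verify the hypotheses of Proposition \ref{prop:comparison} by assembling Lemma \ref{lem:hyperbolic_stationary_concave} (concavity), Lemma \ref{lem:hyperbolic_asymptotics} with Remark \ref{rem:hyperbolic_asymptotics} (asymptotics and initial inequality for $C_0$ large, $b_0$ small) and Lemma \ref{lem:hyperbolic_diff_ineq} (the differential inequality), then read off the curvature bound by matching the small-scale coefficient via Corollary \ref{cor:ricci_comp_curv_bnd}. Your three-region verification of the initial inequality merely makes explicit the book-keeping the paper delegates to Lemma \ref{lem:hyperbolic_asymptotics} and Remark \ref{rem:hyperbolic_asymptotics}, and your coefficient computation $2\pi C(t) + (1-\genus) - b(t)$ agrees with the stated bound.
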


\begin{remark}
The exponential decay in the theorem follows from the fact that $b(t) \to -(1-g)$ exponentially fast, $C(t) \to C_{\text{crit}}$ exponentially fast and hence $\left[4\pi + \frac{2(1-\genus)}{C(t)}\right] \to 0$ exponentially fast.
\end{remark}
\section{Convergence}
\label{sec-5}
\label{50880488-de51-46b5-9883-fc520d40172c}

\label{sec:convergence}

In this last section, let us briefly discuss the proof of Theorem \ref{thm:convergence}. The argument is very standard, following from bootstrapping the curvature bounds to higher derivative bounds. Here we will only outline the steps, indicating how the results here may be applied.

\begin{proof}
[Proof of Theorem \ref{thm:convergence}]
First, observe that by Theorems \ref{thm:sphere_bounds}, \ref{thm:plane_comparison} and, \ref{thm:ricci_hyperbolic_curvature_bound_negative} and, \ref{thm:ricci_hyperbolic_curvature_bound_general} and the fact that $\uniproj: \unicover{M} \to M$ is a local isoemetry, we have uniform upper bounds $\gausscurv_M \leq \gausscurv_0(t) + (1-\genus)$ with $\gausscurv_0(t)$ uniformly bounded and such that $\lim_{t\to\infty} \gausscurv_0(t) = 0$. Since the Gauss curvature evolves according to $\pd{t} \gausscurv = \laplace \gausscurv + \gausscurv(\gausscurv - (\genus - 1))$, by an ODE comparison we also have uniform lower bounds converging to $0$ as $t\to\infty$. Thus $\abs{\gausscurv(t)}$ is uniformly bounded for all $t>0$ and hence the solution exists for all time. 

$L^1$ convergence of the curvature to $1-\genus$ now follows easily. By Gauss-Bonnet, $\gausscurv \leq \gausscurv_0(t) + (1-\genus)$ and, the fact that $\abs{M} = 4\pi$,
\[
0 = \int_M \gausscurv - (1-\genus) d\mu \leq - \int_{\gausscurv \leq (1-\genus)} \abs{\gausscurv - (1-\genus)} d\mu + 4\pi \gausscurv_0(t),
\]
which rearranges to give $\int_{\gausscurv \leq (1-\genus)} \abs{\gausscurv - (1-\genus)} d\mu \leq 4\pi \gausscurv_0(t)$. Therefore  we get
\[
\int_M \abs{\gausscurv - (1-\genus)} d\mu \leq 8\pi \gausscurv_0(t)
\]
which converges to $0$ as $t\to\infty$.

Next we bound the higher derivatives of $\gausscurv$. By the bootstrapping argument described in \cite[Section 7]{MR1375255}, and from the uniform curvature bounds we obtain 
\[
\abs{\nabla^{(j)} \gausscurv}^2 \leq C_j ((1-\genus) + t^{-j})
\]
for constants $C_j>0$.

In the genus $\genus=0$ case, the lower bound on the isoperimetric constant affords very strong analytic control allowing us to apply Gagliardo-Nirenberg inequalities to deduce that $\gausscurv \to_{C^{\infty}} (1-\genus)$ uniformly as $t \to \infty$. See \cite{MR2729306}.

For higher genus surfaces, we don't have such control as noted in remark \ref{rem:isoconst_bdd}. Thus instead, for $\genus>0$ surfaces, with a little more work, another bootstrapping argument gives $\gausscurv \to_{C^{\infty}} (1-\genus)$ uniformly as $t \to \infty$ \cite[Section 7]{MR1375255}.

Finally, in the cases $\genus \ne 1$, we have $\gausscurv_0(t) = Ce^{-at}$ which gives for any non-zero $v\in TM$,
\[
\abs{\pd{t} \ln \metric(t) (v,v)} = 2\abs{\gausscurv - (1-\genus)} \leq C^{-at}
\]
which is integrable in $t$ on $[0,\infty)$. Smooth convergence of the metric now follows by the argument in \cite[Section 17]{MR664497}.

For the case $\genus = 0$ we only have $\gausscurv_0(t) = C/t$ which is not integrable. However, we may use the fact that $\abs{\nabla \gausscurv_0} \leq C/t^{3/2}$ which is integrable to again deduce smooth convergence \cite{MR2061425}.
\end{proof}

\begin{remark}
Note that we have control of the isoperimetric constant on $\unicover{M}$ and a curvature bound. We cannot however use these to obtain the simpler convergence proof using Gagliardo-Nirenberg inequalities since these rely on $L^1$ convergence of the curvature. But this is invalid for $\genus > 1$ since $\unicover{M}$ is not compact. Perhaps one might deduce $L^1_{\text{loc}}$ convergence, but note that the above $L^1$ convergence argument uses Gauss-Bonnet. In the $L^1_{\text{loc}}$ case, we would need to deal with boundary terms arising from Gauss-Bonnet and I don't know how to control these. This is perhaps related to transferring isoperimetric control from $\unicover{M}$ to $M$.
\end{remark}

\printbibliography

\end{document}